\newtheorem{theorem}{Theorem}[section]
\newtheorem{prop}[theorem]{Proposition}
\newtheorem{corollary}{Corollary}[theorem]
\newtheorem{lemma}[theorem]{Lemma}
\newtheorem*{remark}{Remark}
\newtheorem{definition}{Definition}
\newtheorem{conjecture}[theorem]{Conjecture}
\newcommand{\ceil}[1]{\left\lceil #1 \right\rceil}
\newcommand{\floor}[1]{\left\lfloor #1 \right\rfloor}
\def\tmixe{t_{mix}\left(\varepsilon\right)\ }
\def\tmix{t_{mix} }
\def\tmixn{t_{mix}^{(n)}(\varepsilon)}
\def\lams{\lambda_{\star}}
\def\zp{\mathbb{Z}/p\mathbb{Z}}
\title{Mixing Times for the Commuting Chain on CA Groups}
\author{John Rahmani\\ ajrahman@usc.edu \thanks{University of Southern California, Los Angeles, California, USA}}
\begin{document}
\maketitle

\begin{abstract}

Let $G$ be a finite group. The {\it commuting chain} on $G$ moves from an element $x$ to $y$ by selecting $y$ uniformly amongst those which commute with $x$. The $t$ step transition probabilities of this chain converge to a distribution uniform on the conjugacy classes of $G$. We provide upper and lower bounds for the mixing time of this chain on a CA group (groups with a ``nice" commuting structure) and show that cutoff does not occur for many of these chains. We also provide a formula for the characteristic polynomial of the transition matrix of this chain. We apply our general results to explicitly study the chain on several sequences of groups, such as general linear groups, Heisenberg groups, and dihedral groups. 

The commuting chain is a specific case of a more general family of chains known as Burnside processes. Few instances of the Burnside processes have permitted careful analysis of mixing. We present some of the first results on mixing for the Burnside process where the state space is not fully specified (i.e not for a particular group). Our upper bound shows our chain is rapidly mixing, a topic of interest for Burnside processes.

\end{abstract}

\section{Intoduction}

The {\it commuting chain}  on a finite group $G$ is a Markov chain with state space $G$ which moves from $x$ to $y$ by selecting $y$ uniformly amongst the elements which commute with $x$. The chain converges to an equilibrium distribution uniform on the conjugacy classes of $G$. Our aim in this paper is to study the mixing times (convergence rate) of this chain when $G$ is a CA group. A group is a CA if upon removing the center commuting is a transitive relation (and thus partitions the group into equivalence classes of commuting elements). \\
\indent	Our main results are upper and lower bounds on mixing times of the commuting chain on a CA group in terms of the size of the group's center and largest non-trivial centralizer. Using our upper bound we are able to show that cutoff will not occur for this chain in many cases. We also provide a formula for characteristic polynomial for the transition matrix of this chain. Using that we see our bounds for mixing are better than what one obtains from using only the second largest eigenvalue. \\
\indent This chain is a an example of a more general family of chains known as Burnside processes introduced in \cite{MrkJOrig} (see below for more). Some initial examples were shown to have {\it rapid mixing}, meaning the mixing time is bounded by a polynomial in the size of the state space. Later work (\cite{BurnSlw}) showed this will not always be the case for a Burnside process. Our main upper bound (\Cref{minUB}) shows that the commuting chain is always rapidly mixing.\\
\indent In the remainder of this section we discuss the mixing and group theoretic preliminaries we need for our analysis, and carefully state our problem and its relation to the  Burnside process. In \Cref{bounds} we prove general bounds on the mixing time of our chain on CA groups and disprove cutoff under an additional assumption. In \Cref{cpsec} we provide a formula for the characteristic polynomial of this chain in terms of some parameters of the underlying group. We apply our general results to specific families of groups in \Cref{ex}. Finally we state some interesting features of our results and make some conjectures in \Cref{re}.

\subsection{Mixing Times}

Let $P(x,y)$ be the transition matrix of an irreducible and aperiodic Markov chain with stationary distribution $\pi$ on state space $\mathcal{X}$. It is well known that 
$\Vert P^t(x,\cdot)-\pi \Vert_{TV}\rightarrow0$ as $t\rightarrow \infty$ for any $x\in\mathcal{X}$. Where $\Vert\mu-\nu\Vert_{TV}=\sup_{A\subset\mathcal{X}}|\mu(A)-\nu(A)|$. We define 
 \[d(t)=\max_{x}\Vert P^t(x,\cdot)-\pi \Vert_{TV}\] 
 and
  \[t_{mix}(\varepsilon)=\min\{ t:d(t)\leq\varepsilon\}. \]

 We take $t_{mix}=t_{mix}\left(\frac{1}{4}\right)$ by convention.\\
 
 	 We are interested in how $\tmixe$ varies as the state space of the chain grows (for example a deck of $n$ cards as $n$ grows) so in principle $\tmixe$ depends on a size parameter, say $n$, and should be denoted $t_{mix}^{(n)}\left(\varepsilon\right)$. However we suppress the the dependence on $n$ when not needed.

\subsection{Background on Groups}

\par Given a finite group G we define the centralizer of $g\in G$ as \[C_g=\{ h\in G :hg=gh\},\]  the set of all elements which commute with $g$. The center of G is \[Z=\{h\in G: hg=gh
\text{ for all }g\in G\},\] 
the set of elements which commute with everything. The conjugacy class of an element $x\in G$ will be denoted $x^G$.\\

The orbit-stabilizer lemma (see for instance \cite{DF}) tells us $|C_x||x^G|=|G|$.\\

We will work with the following class of groups. 
\begin{definition} A group G is a CA (or CT) group if commutativity is a transitive relation on $G\setminus Z$. \end{definition}

\begin{remark}

A CA group is partitioned into the center and disjoint sets of elements which commute. An alternate definition of CA group is that the centralizer of any non-central element is abelian.  

\end{remark}

CA groups with a trivial center have been classified. The following result can be found in ~\cite{CTgroupsnew}. 
\begin{theorem}
Every non-abelian simple CA group with trivial center is isomorphic to some $PSL(2,2^k), k\geq2$. 
\end{theorem}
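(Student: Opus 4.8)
We propose to reduce the statement to a classical classification theorem plus a short centralizer bookkeeping, observing first that a non-abelian simple group has trivial center, so the CA hypothesis is precisely that $C_g$ is abelian for every $g\neq 1$. Hence any two distinct maximal abelian subgroups of $G$ meet only in $1$, so the maximal abelian subgroups partition $G\setminus\{1\}$; that is, $G$ admits a nontrivial group-theoretic partition, and we would invoke the Baer--Kegel--Suzuki classification of finite groups with a nontrivial partition. The non-abelian simple groups on that list are exactly $PSL(2,q)$ with $q\geq 4$ and the Suzuki groups $Sz(q)$ with $q=2^{2m+1}\geq 8$. (An equivalent opening move that sidesteps the partition theorem: if $S$ is a Sylow $2$-subgroup and $1\neq t\in Z(S)$, then $S\leq C_t$, so $S$ is abelian; $G$ is then a simple group with abelian Sylow $2$-subgroups and Walter's theorem gives the same short list together with $J_1$ and the Ree groups ${}^2G_2(q)$. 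We would use whichever input is more convenient.)

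It then remains only to decide which groups on this list actually satisfy the CA condition, and for that it suffices to inspect the centralizer of a single involution in each family. In $PSL(2,2^k)$ every non-identity element is unipotent or semisimple, with centralizer respectively an elementary abelian Sylow $2$-subgroup or a cyclic maximal torus of order $2^k\pm 1$; all of these are abelian, so $PSL(2,2^k)$ is a CA group, and it is simple exactly when $k\geq 2$. In $PSL(2,q)$ with $q$ odd the centralizer of an involution is dihedral of order $q-1$ or $q+1$, which is abelian only when this order is at most $4$, i.e. only for $q=3,5$; since $PSL(2,3)$ is not simple and $PSL(2,5)\cong PSL(2,4)$ already appears above, no new group arises. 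In $Sz(q)$ the centralizer of an involution is a Sylow $2$-subgroup, which is a non-abelian $2$-group, so no Suzuki group is CA; and in $J_1$ and ${}^2G_2(q)$ the involution centralizer has the form $C_2\times A_5$ respectively $C_2\times PSL(2,q)$, hence is non-abelian as well. Combining these computations with the classification recalled above gives $G\cong PSL(2,2^k)$ for some $k\geq 2$.

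The conceptual weight, and the main obstacle, lies entirely in the first step: both the partition classification and Walter's theorem are deep (resting on signalizer-functor methods and the Gorenstein--Walter analysis of dihedral Sylow $2$-subgroups), so in practice one \emph{cites} them rather than reproving them. If one instead wanted a more self-contained treatment, one would follow Suzuki's original route: show first that CA groups of odd order are solvable --- so a non-abelian simple CA group has even order and thus an involution $t$ with $C_t$ abelian --- then use exceptional-character theory to pin down $|G|$ and the permutation character on the cosets of $C_t$, and finally recognize $G$ as $PSL(2,2^k)$ via its induced action on an associated geometry (for instance through Zassenhaus's description of the sharply $3$-transitive finite groups). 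In either case, everything downstream of the classification input --- the centralizer bookkeeping in these rank-one groups --- is routine.
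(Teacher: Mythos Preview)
The paper does not prove this theorem at all: it is stated as background and attributed to the literature via the citation to \cite{CTgroupsnew}, with no argument given. There is therefore nothing in the paper to compare your proposal against.

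That said, your sketch is a correct outline of how the classical proof goes. The reduction step---observing that in a non-abelian simple group the center is automatically trivial, that CA forces every nontrivial centralizer to be abelian, and hence that the maximal abelian subgroups form a nontrivial partition (or, equivalently, that a Sylow $2$-subgroup must be abelian)---is exactly right, and the downstream centralizer checks in $PSL(2,q)$, $Sz(q)$, $J_1$, and ${}^2G_2(q)$ are accurate. You are also candid that the real content is the cited classification (Baer--Kegel--Suzuki or Walter, or Suzuki's original CA-group argument), which is the honest way to present this: the paper uses the theorem only as a structural remark about which groups are CA, not as something it attempts to reprove.
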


Here $PSL(2,q)$ is the $2\times2$ projective special linear group over a field of order $q$, this is the group obtained by taking the quotient group of $SL(n,q)$ by the subgroup of scalar matrices with unit determinate.

\subsection{Commuting Chain and the Burnside process }

\par Suppose $G$ is a finite group. The {\it commuting chain} on $G$ is a Markov chain with state space $G$ and transition probabilities 

\[P(x,y)= \frac{1}{|C_x|}\mathds{1}_{\{xy=yx\}}\text{ for all }x,y\in G. \] The chain moves from $x$ by picking the next state uniformly at random from those which commute with $x$.\\

Since the identity commutes with everything the chain is irreducible, and since any element commutes with itself the chain is aperiodic. \\

\par For $x\in G$ let  
 \[\pi(x)=\frac{1}{k|x^G| }\]

where $k$ is the number of conjugacy classes of $G$. $k$ normalizes $\pi$ as $\sum_{x\in G} \frac{1}{|x^G|}=k$ since each $x$ appears in only one conjugacy class.\\

The orbit stabilizer lemma tells us that 
\[\pi(x)P(x,y)=\frac{1}{k|x^G|} \frac{1}{|C_x|}\mathds{1}_{\{xy=yx\}}=\frac{1}{k|y^G|} \frac{1}{|C_y|}\mathds{1}_{\{xy=yx\}}=\pi(y)P(y,x)\]

and so the commuting chain is reversible with respect to $\pi$. Irreducibility and aperiodicity then ensure that the $t$-step transition probabilities converge to $\pi$ in total variation.  Our goal is to bound the mixing times of this chain for CA groups.\\
\indent The commuting chain is a special case of the {\it Burnside process} introduced in \cite{MrkJOrig}. The Burnside process takes place in the more general context of a group acting on a set. Let $G$ be a group acting on $\mathcal{A}$. The  Burnside process has state space $\mathcal{A}$ and moves from $a$ to $b$ as follows. From $a$ select uniformly a $g\in G$ such that $g\cdot a=a$. Now given $g$ select uniformly amongst the $b$ such that $g\cdot b=b$. The corresponding transition matrix is reversible with respect to a measure uniform on the orbits of the group action. The commuting chain can be seen as a (interpolated) Burnside process where a group is acting on itself through conjugation. \\
\indent The earliest analysis of a case of the Burnside process (by Jerrum in ~\cite{MrkJOrig}) showed a chain with the {\it rapid mixing} property - that is the mixing time is bounded by a polynomial in the size of the state space. In \cite{BurnSlw}  Goldberg and Jerrum construct a Burnside process which does not mix rapidly. \Cref{minUB} shows that the commuting chain does mix rapidly.\\
\indent The  results in \cite{BurnSlw} were proven by comparing the chain to a Swendsen-Wang algorithm. Swendsen-Wang algorithm is a graph coloring Markov chain from statistical mechanics which converges to a Potts model. Swendsen-Wang with a two color Potts model is a common alternative to the Glauber dynamics, as they both converge to the same Gibbs distribution and in many cases Swendsen-Wang will converge faster than the Glauber dynamics.\\
\indent In \cite{AFILL} Aldous and Fill study mixing times for a Burnside process with a coupling. In \cite{DiacBos} Diaconis bounds mixing times for the chain studied by Aldous and Fill, using a minorization condition for the upper bound. In \cite{HitNRun} the commuting chain is listed as an example of the Burnside process in the context of Markov chains which converge quickly. Chapter two of \cite{permgroup} also mentions the chain and its connections to permutation groups.  \\
 \indent These are the only examples we know of where the Burnside process has been studied. In particular we believe this paper presents the first (published) mixings bounds for the commuting chain.

\subsection{Minorization}

 We use this basic form of a minorization bound that can be found in \cite{MinSurv}.
 
 \begin{theorem}\label{minorization}
Let $P$ be the transition matrix for an irreducible, aperiodic Markov chain with stationarity distribution $\pi$ on state space $\mathcal{X}$. Let $Q$ be a probability measure on $\mathcal{X}$ such that $Q(A)>0$ whenever $\pi(A)>0$. Suppose for some $1>\delta>0$ we have that 
\[P^{t_0}(x,A)\geq\delta  Q(A) \text{ for all } x \in G,A\subset \mathcal{X} \]

then 

\[d(t)\leq(1-\delta)^{ \floor{\frac{t} {t_0} }}.\]

\end{theorem}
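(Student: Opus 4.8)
The plan is to pass to the $t_0$-step skeleton chain and run a block coupling. First I would rewrite the minorization hypothesis as a genuine mixture: for each $x$ define the residual
\[ R_x(\cdot) \;=\; \frac{P^{t_0}(x,\cdot)-\delta\,Q(\cdot)}{1-\delta}. \]
The assumption $P^{t_0}(x,A)\ge\delta\,Q(A)$ says $R_x$ is a nonnegative set function, and since $P^{t_0}(x,\cdot)$ and $Q$ are probability measures $R_x$ has total mass $1$, so it is a probability measure and $P^{t_0}(x,\cdot)=\delta\,Q(\cdot)+(1-\delta)R_x(\cdot)$. Writing $\widehat P:=P^{t_0}$ for the skeleton kernel, this is the matrix identity $\widehat P=\delta\,\Pi_Q+(1-\delta)R$, where $\Pi_Q$ is the stochastic matrix all of whose rows equal $Q$ and $R$ is the stochastic matrix with rows $R_x$.

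Next I would build a Markovian coupling $(X_j,Y_j)_{j\ge0}$ of two copies of the skeleton chain, started from $X_0=x$ (arbitrary) and $Y_0\sim\pi$. At step $j$: if $X_j=Y_j$, update both to a common state drawn from $\widehat P(X_j,\cdot)$; if $X_j\neq Y_j$, flip an independent $\delta$-coin, on success move both chains to a common $Z\sim Q$, and on failure move $X_{j+1}\sim R_{X_j}$ and $Y_{j+1}\sim R_{Y_j}$ independently. The mixture identity guarantees each coordinate is marginally a faithful copy of $\widehat P$ (so $X_j\sim P^{jt_0}(x,\cdot)$ and $Y_j\sim\pi$ for all $j$), and once the two chains agree they agree forever; hence $\{X_m\neq Y_m\}$ is contained in the event that the $\delta$-coins drawn at steps $0,\dots,m-1$ all fail, which has probability $(1-\delta)^m$. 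The coupling inequality then gives $\Vert P^{mt_0}(x,\cdot)-\pi\Vert_{TV}\le\Pr(X_m\neq Y_m)\le(1-\delta)^m$, and taking the maximum over $x$ yields $d(mt_0)\le(1-\delta)^m$. Finally I would upgrade this to all $t$: since $\Vert\mu P-\nu P\Vert_{TV}\le\Vert\mu-\nu\Vert_{TV}$ for any transition matrix $P$, the function $t\mapsto d(t)$ is non-increasing, and as $t\ge t_0\floor{t/t_0}$ we get $d(t)\le d\bigl(t_0\floor{t/t_0}\bigr)\le(1-\delta)^{\floor{t/t_0}}$, which is the claim.

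The one point I expect to need care is checking that the coupling really is marginally correct once the ``already coupled'' and ``coin flip'' cases are merged; this holds because when $X_j=Y_j=z$ the common update from $\widehat P(z,\cdot)$ equals $\delta\,Q(\cdot)+(1-\delta)R_z(\cdot)$, exactly the law demanded in the other case, so no marginal is distorted. Everything else is routine. A purely algebraic alternative bypasses coupling entirely: iterating $\widehat P=\delta\,\Pi_Q+(1-\delta)R$ and using $N\Pi_Q=\Pi_Q$ and $\Pi_Q N=\Pi_{QN}$ for stochastic $N$ shows $\widehat P^{m}=(1-\delta)^m R^m+\bigl(1-(1-\delta)^m\bigr)\Pi_{\nu_m}$ for some probability vector $\nu_m$; subtracting the stationarity identity $\pi=(1-\delta)^m\,\pi R^m+\bigl(1-(1-\delta)^m\bigr)\nu_m$ gives $\widehat P^{m}(x,\cdot)-\pi=(1-\delta)^m\bigl(R^m(x,\cdot)-\pi R^m\bigr)$, so $\Vert\widehat P^{m}(x,\cdot)-\pi\Vert_{TV}\le(1-\delta)^m$ and one concludes as before. (The hypothesis $Q(A)>0$ whenever $\pi(A)>0$ is not needed for this direction; it merely ensures the minorizing measure $Q$ is compatible with the support of $\pi$.)
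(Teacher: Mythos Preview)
Your argument is correct: the Doeblin-type coupling via the mixture decomposition $P^{t_0}(x,\cdot)=\delta\,Q(\cdot)+(1-\delta)R_x(\cdot)$ is the standard route to this bound, and your monotonicity step to pass from multiples of $t_0$ to arbitrary $t$ is clean. The algebraic alternative you sketch is also valid and arguably tidier.

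There is nothing to compare against in the paper itself, however: the paper does not prove this theorem but simply quotes it as a known result from \cite{MinSurv}. Your write-up would serve as a self-contained proof should one be wanted. Your parenthetical remark that the support hypothesis on $Q$ is not actually used in the argument is also correct.
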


The bounds attained are not always suitable for mixing problems. Two examples where minorization has been useful are in \cite{DiacNonRev} and \cite{DiacBos}, that latter of which was studying a  Burnside process.\\
\indent The reason we can use these bounds work well for us is that our chain makes ``big jumps" - that is we have an allowable transition between any two states with at-most $2$ steps.

\subsection{Coupling}

We present a simple use of coupling to bound mixing times, see chapters  5,14 of \cite{LPW} for more. 

\begin{theorem}\label{PthCou}
Let $\mathcal{X}$ be the state space of a finite ergodic Markov chain, and $\rho$ be a metric on $\mathcal{X}$ satisfying $\rho(x,y)\geq\mathds{1}_{\{x\neq y\}}$. Suppose there exists a constant $\alpha >0$ and a coupling $(X_1,Y_1)$ of the Markov chain satisfying 

\[\mathbb{E}_{x,y}[\rho(X_1,Y_1)]\leq e^{-\alpha}\rho(x,y)\] 

for all $x,y\in\mathcal{X}$.  Then 

\[t_{mix}(\varepsilon)\leq \ceil{\frac{1}{\alpha}\log\left(\frac{\Delta}{\varepsilon}\right)}\]

for $\Delta = \max_{x,y}\rho(x,y)$. 

\end{theorem}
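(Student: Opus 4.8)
The statement is the standard "contracting coupling" lemma: if a one-step coupling contracts an expected distance by a factor $e^{-\alpha}$, then $t_{mix}(\varepsilon) \leq \lceil \frac{1}{\alpha}\log(\Delta/\varepsilon)\rceil$. The plan is to iterate the contraction hypothesis, pass from expected distance to total variation via Markov's inequality, and then invert.

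First I would set up the coupling over $t$ steps. Given a coupling $(X_1,Y_1)$ of one step of the chain satisfying the hypothesis, extend it to a coupling $(X_t, Y_t)_{t \ge 0}$ of two full trajectories started from $x$ and $y$ respectively, by running the one-step coupling successively conditioned on the current pair. Then by the tower property and the contraction hypothesis applied at each step,
\[
\mathbb{E}_{x,y}[\rho(X_t,Y_t)] \le e^{-\alpha}\,\mathbb{E}_{x,y}[\rho(X_{t-1},Y_{t-1})] \le \cdots \le e^{-\alpha t}\rho(x,y) \le e^{-\alpha t}\Delta.
\]
The one subtlety here is that the contraction hypothesis is stated pointwise in the starting pair $(x,y)$, so one must condition on $(X_{t-1},Y_{t-1})$ and invoke it inside the expectation; this is routine but should be stated carefully.

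Next I would convert to total variation. Since $\rho(x,y) \ge \mathds{1}_{\{x \ne y\}}$, we have $\mathbb{P}_{x,y}(X_t \ne Y_t) \le \mathbb{E}_{x,y}[\rho(X_t,Y_t)] \le e^{-\alpha t}\Delta$ (this is just Markov's inequality, or directly the fact that the indicator is dominated by $\rho$). The coupling inequality (Proposition/Theorem in Chapter 5 of \cite{LPW}) gives
\[
\Vert P^t(x,\cdot) - P^t(y,\cdot)\Vert_{TV} \le \mathbb{P}_{x,y}(X_t \ne Y_t) \le e^{-\alpha t}\Delta.
\]
Taking $y$ distributed according to $\pi$ (and using that $\pi P^t = \pi$, together with convexity of total variation distance) yields $d(t) \le \max_x \Vert P^t(x,\cdot) - \pi\Vert_{TV} \le e^{-\alpha t}\Delta$.

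Finally I would solve $e^{-\alpha t}\Delta \le \varepsilon$ for $t$: this holds as soon as $t \ge \frac{1}{\alpha}\log(\Delta/\varepsilon)$, hence for $t = \lceil \frac{1}{\alpha}\log(\Delta/\varepsilon)\rceil$, giving the claimed bound on $t_{mix}(\varepsilon)$. I do not expect any real obstacle here — the only point requiring care is the iteration of the pointwise contraction hypothesis inside nested conditional expectations when building the $t$-step coupling; everything else (Markov's inequality, the coupling inequality, and the final algebraic inversion) is standard. This is why the paper cites Chapters 5 and 14 of \cite{LPW} rather than giving a detailed argument.
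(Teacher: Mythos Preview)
Your argument is correct and is the standard proof of this contraction-coupling bound. Note that the paper does not actually prove \Cref{PthCou}: it is stated as a background result with a reference to Chapters~5 and~14 of \cite{LPW}, and your sketch is precisely the argument found there.
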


\subsection{Eigenvalues}

It is well known that the  $n\times n$ transition matrix of a reversible Markov chain has  $n$ real eigenvalues corresponding to real valued eigenfunctions. For an irreducible, aperiodic, reversible chain let $\lambda_i$ for $1\leq i\leq n$ be the eigenvalues ordered as $1=\lambda_1>\lambda_2\geq\hdots\geq\lambda_n>-1$. Let $\lams=\max\{\lambda_2,|\lambda_n|\}$, the relaxation time of the chain is then $t_{rel}=\frac{1}{1-\lams}$. The following bound can be found in \cite{LPW}

\begin{prop}\label{StdEigBnd}
Let $\tmixe$ be the mixing time for a reversible, irreducible, and aperiodic Markov chain with stationary distribution $\pi$  and relaxation time $t_{rel}$. Then

\[\left(t_{rel}-1\right)\log{\left(\frac{1}{2\varepsilon}\right)}\leq\tmixe\leq \ceil{ t_{rel} \log{\left(\frac{1}{\varepsilon \pi_{min}} \right)}}\]

where $\pi_{min}=\min_{x}\pi(x)$.

\end{prop}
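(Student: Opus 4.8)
The plan is to run the standard spectral argument. Since the chain is reversible, $P$ is self-adjoint on $L^2(\pi)$, so there is an orthonormal basis of real-valued eigenfunctions $f_1\equiv 1,f_2,\dots,f_n$ with $Pf_i=\lambda_if_i$ and $\sum_y\pi(y)f_i(y)f_j(y)=\mathds{1}_{\{i=j\}}$. Iterating the decomposition gives the standard formula
\[\frac{P^t(x,y)}{\pi(y)}=\sum_{i=1}^n\lambda_i^{\,t}f_i(x)f_i(y),\]
and evaluating it at $t=0$, $x=y$ produces the pointwise identity $\sum_{i=1}^n f_i(x)^2=\frac{1}{\pi(x)}$, which is the workhorse of the upper bound.

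For the upper bound I would pass through the $L^2(\pi)$ norm: by Cauchy--Schwarz,
\[4\,\Vert P^t(x,\cdot)-\pi\Vert_{TV}^2\le\sum_y\pi(y)\left(\frac{P^t(x,y)}{\pi(y)}-1\right)^2=\sum_{i=2}^n\lambda_i^{\,2t}f_i(x)^2\le\lams^{2t}\sum_{i=2}^n f_i(x)^2\le\frac{\lams^{2t}}{\pi_{min}},\]
using $\lambda_i^2\le\lams^2$ for $i\ge 2$ together with the identity above. Hence $d(t)\le\frac12\lams^t\pi_{min}^{-1/2}\le\lams^t\pi_{min}^{-1}\le e^{-t/t_{rel}}\pi_{min}^{-1}$, where the last step uses $\lams\le e^{-(1-\lams)}=e^{-1/t_{rel}}$ (from $1+x\le e^x$). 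Requiring the right-hand side to be at most $\varepsilon$ and solving for $t$ yields $\tmixe\le\ceil{t_{rel}\log\left(\frac{1}{\varepsilon\pi_{min}}\right)}$.

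For the lower bound, let $f$ be a real eigenfunction for whichever of $\lambda_2,\lambda_n$ realizes $\lams$, scaled so that $\Vert f\Vert_\infty=1$ with $|f(x_\star)|=1$. Orthogonality to $f_1$ gives $\sum_y\pi(y)f(y)=0$, while $\sum_yP^t(x_\star,y)f(y)=\lams^{\,t}f(x_\star)$ has modulus $\lams^{\,t}$; subtracting and using $|f|\le 1$,
\[\lams^{\,t}\le\sum_y\left|P^t(x_\star,y)-\pi(y)\right|\,|f(y)|\le 2\,\Vert P^t(x_\star,\cdot)-\pi\Vert_{TV}\le 2d(t).\]
Evaluating at $t=\tmixe$ gives $\lams^{\tmixe}\le 2\varepsilon$, so (for $\varepsilon<\tfrac12$) $\tmixe\ge\frac{\log(1/(2\varepsilon))}{\log(1/\lams)}$, and the elementary inequality $\log(1/\lams)\le\frac{1-\lams}{\lams}$ converts $\frac{1}{\log(1/\lams)}\ge\frac{\lams}{1-\lams}=t_{rel}-1$, completing the bound.

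The only real care needed is in the two elementary logarithm estimates and in the sign bookkeeping caused by a possibly negative $\lambda_n$ (handled cleanly by working with $\lambda_i^2$ and with $|\lams|=\lams\ge0$); the single load-bearing fact is the identity $\sum_i f_i(x)^2=1/\pi(x)$, which is exactly what forces the $\pi_{min}$ into the upper bound. Since this is a textbook statement --- it is the one quoted from \cite{LPW} --- in the paper it is legitimate simply to cite the reference, the argument above being the route one would follow to reprove it.
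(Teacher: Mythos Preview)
Your proof is correct and is the standard spectral argument; the paper itself does not prove this proposition at all but simply cites \cite{LPW}, exactly as you anticipated in your final paragraph. There is nothing to compare on the level of argument: what you wrote is essentially the proof one finds in that reference.
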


\subsection{Cheegar Constant}

Suppose $P$ is the transition matrix of an irreducible, aperiodic Markov chain with stationary distribution $\pi$ on $\mathcal{X}$. For $S\subset \mathcal{X}$ let $\Phi(S)=\frac{\sum_{x\in S}\sum_{y\in S^c}\pi(x)P(x,y)}{\pi(S)}$ and \[\Phi_\star=\min_{S:\pi(S)\leq1/2}\Phi(S,S^c).\]

$\Phi_\star$ is often called the Cheegar constant or bottleneck ratio. $\Phi_\star$ is most often used to bound eigenvalues of a reversible transition matrix, but can also be used to directly bound mixing times. The following results can be found in \cite{LPW} .

\begin{theorem}\label{sgapbnd} 
For a reversible Markov chain let $\Phi_\star$ be as above. Suppose $\lambda_2$  is the second largest eigenvalue of the transition matrix of the Markov chain. Then we have 
\[\frac{\Phi_\star^2}{2}\leq 1-\lambda_2 \leq2\Phi_\star.\]
\end{theorem}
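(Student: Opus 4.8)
The plan is to combine the variational (Courant--Fischer) description of the spectral gap with the Dirichlet form, proving the two inequalities separately: the right-hand bound is a one-line test-function estimate, while the left-hand bound is the genuine Cheeger inequality and carries essentially all the difficulty.

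Write $Q(x,y)=\pi(x)P(x,y)$ for the edge measure (symmetric by reversibility), $Q(S,S^c)=\sum_{x\in S,\,y\in S^c}Q(x,y)$, and let $\mathcal{E}(f,f)=\tfrac12\sum_{x,y}Q(x,y)\bigl(f(x)-f(y)\bigr)^2$ be the Dirichlet form; a short computation gives $\mathcal{E}(f,f)=\langle f,(I-P)f\rangle_\pi$, and since $P$ is self-adjoint on $\ell^2(\pi)$ the variational characterization of eigenvalues yields $1-\lambda_2=\min\{\mathcal{E}(f,f)/\operatorname{Var}_\pi(f):\operatorname{Var}_\pi(f)>0\}$ (constants lie in the kernel of $\mathcal{E}$, so it is enough to minimize over mean-zero $f$). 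For the upper bound I would take a set $S$ attaining $\Phi_\star$, so $\pi(S)\le 1/2$, and feed the centered indicator $f=\mathds{1}_S-\pi(S)$ into this minimum: one checks directly that $\mathcal{E}(f,f)=Q(S,S^c)=\Phi_\star\,\pi(S)$ while $\operatorname{Var}_\pi(f)=\pi(S)\bigl(1-\pi(S)\bigr)\ge\pi(S)/2$, so $1-\lambda_2\le 2\Phi_\star$.

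For the lower bound I would first reduce to a nonnegative function. Let $f$ be a $\lambda_2$-eigenfunction; after possibly replacing $f$ by $-f$ we may assume $\pi\{f>0\}\le 1/2$, and we set $g=\max(f,0)$. The key step is the truncation estimate $\mathcal{E}(g,g)\le(1-\lambda_2)\Vert g\Vert_2^2$: from $\mathcal{E}(g,g)=\sum_x\pi(x)g(x)\bigl(g(x)-(Pg)(x)\bigr)$, on $\{f\le0\}$ the summand vanishes, while on $\{f>0\}$ one has $g=f$ and $Pg\ge Pf=\lambda_2 f=\lambda_2 g$ pointwise, so each summand is at most $(1-\lambda_2)g(x)^2$. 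Next, for $s\ge 0$ put $A_s=\{g^2>s\}\subseteq\operatorname{supp}(g)$, so $\pi(A_s)\le 1/2$ and hence $Q(A_s,A_s^c)\ge\Phi_\star\,\pi(A_s)$; the co-area identity $\sum_{x,y}Q(x,y)\,|g(x)^2-g(y)^2|=2\int_0^\infty Q(A_s,A_s^c)\,ds$ together with $\int_0^\infty\pi(A_s)\,ds=\Vert g\Vert_2^2$ gives
\[\sum_{x,y}Q(x,y)\,|g(x)^2-g(y)^2|\ \ge\ 2\Phi_\star\Vert g\Vert_2^2.\]
On the other hand, factoring $|g(x)^2-g(y)^2|=|g(x)-g(y)|\,|g(x)+g(y)|$ and applying Cauchy--Schwarz bounds the left side by $\bigl(2\mathcal{E}(g,g)\bigr)^{1/2}\bigl(\sum_{x,y}Q(x,y)(g(x)+g(y))^2\bigr)^{1/2}\le 2\Vert g\Vert_2\bigl(2\mathcal{E}(g,g)\bigr)^{1/2}$, where the last inequality uses $(a+b)^2\le 2a^2+2b^2$ and $\sum_y Q(x,y)=\pi(x)$. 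Chaining these (and noting $g\not\equiv 0$, since $f$ has mean zero and is not identically $0$) gives $\Phi_\star^2\Vert g\Vert_2^2\le 2\mathcal{E}(g,g)\le 2(1-\lambda_2)\Vert g\Vert_2^2$, i.e. $\Phi_\star^2/2\le 1-\lambda_2$.

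The main obstacle is the lower bound. Two points need care: the truncation inequality $\mathcal{E}(g,g)\le(1-\lambda_2)\Vert g\Vert_2^2$, which is the only place an actual $\lambda_2$-eigenfunction (not a generic test function) is used and which rests on the pointwise monotonicity $Pg\ge Pf$; and tracking the constants through the co-area plus Cauchy--Schwarz step so that $\Phi_\star$ enters squared, producing $\Phi_\star^2/2$ rather than a weaker power. The upper bound, by contrast, is routine.
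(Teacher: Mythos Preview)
The paper does not prove this theorem; it is stated as a preliminary fact and attributed to \cite{LPW} (Levin--Peres--Wilmer), so there is no ``paper's own proof'' to compare against. Your argument is the standard one---the variational characterization of the gap for the upper bound, and the truncation-plus-co-area-plus-Cauchy--Schwarz route for the Cheeger lower bound---and it is correct as written.
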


\begin{theorem}\label{CLB} 
Let $\Phi_\star$ be as above and $\tmix$ be the mixing time for the corresponding Markov chain. Then we have
 \[\frac{1}{4\Phi_{\star}}\leq t_{mix}.\]
\end{theorem}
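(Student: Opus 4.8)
The plan is to run the standard bottleneck argument. Since $\mathcal{X}$ is finite, fix a set $S\subset\mathcal{X}$ with $\pi(S)\le 1/2$ attaining the minimum, so $\Phi(S)=\Phi_\star$. Start the chain from $\pi$ conditioned on $S$, i.e.\ from the distribution $\pi_S$ with $\pi_S(x)=\pi(x)/\pi(S)$ for $x\in S$ and $\pi_S(x)=0$ otherwise, and write $\mu_t=\pi_S P^t$. The quantity to control is the escape probability $f(t):=\mu_t(S^c)$, with $f(0)=0$; the goal is to show $f(t)\le t\,\Phi_\star$, i.e.\ that at most $\Phi_\star$ of the mass leaks out of $S$ per step.

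For the per-step bound, I would expand $\mu_{t+1}(S^c)=\sum_x\mu_t(x)P(x,S^c)$ and split the sum over $S$ and $S^c$; using $\mu_t(S^c)=\sum_{x\in S^c}\mu_t(x)$ and $P(x,S^c)=1-P(x,S)$, the increment telescopes to $f(t+1)-f(t)=\sum_{x\in S}\mu_t(x)P(x,S^c)-\sum_{x\in S^c}\mu_t(x)P(x,S)$, which is at most $\sum_{x\in S}\mu_t(x)P(x,S^c)$. The key input is the pointwise domination $\mu_t(x)\le\pi(x)/\pi(S)$ for all $x$ and $t$: it holds at $t=0$ by construction, and is preserved under one step of $P$ because $P$ has nonnegative entries and $\pi P=\pi$. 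Substituting it gives $f(t+1)-f(t)\le\frac{1}{\pi(S)}\sum_{x\in S}\pi(x)P(x,S^c)=\Phi(S)=\Phi_\star$, and summing from $0$ to $t$ yields $f(t)\le t\,\Phi_\star$.

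To finish, I would turn this into a lower bound on $d(t)$. For any set $A$ we have $\|\mu_t-\pi\|_{TV}\ge\pi(A)-\mu_t(A)$; taking $A=S^c$ and using $f(t)\le t\,\Phi_\star$ together with $\pi(S)\le 1/2$ gives $\|\mu_t-\pi\|_{TV}\ge\pi(S^c)-f(t)\ge 1/2-t\,\Phi_\star$. On the other hand $\mu_t=\sum_{x\in S}\pi_S(x)\,P^t(x,\cdot)$ is a convex combination of the rows $P^t(x,\cdot)$, so the triangle inequality for $\|\cdot-\pi\|_{TV}$ gives $\|\mu_t-\pi\|_{TV}\le\max_x\|P^t(x,\cdot)-\pi\|_{TV}=d(t)$. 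Hence $d(t)\ge 1/2-t\,\Phi_\star$ for every $t$, and applying this at $t=\tmix$, where $d(\tmix)\le 1/4$, gives $1/4\ge 1/2-\tmix\,\Phi_\star$, that is $\tmix\ge 1/(4\Phi_\star)$.

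The one step I would be most careful about is the pointwise domination $\mu_t\le\pi/\pi(S)$, since it is exactly what converts the combinatorial bottleneck ratio into a per-step leakage estimate; once it is in place the remaining inequalities are routine. Note that reversibility is not used anywhere in this argument, so the bound holds for any irreducible aperiodic chain.
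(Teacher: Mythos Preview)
Your argument is correct; this is essentially the standard bottleneck proof (as in Levin--Peres--Wilmer, Theorem~7.4). Note, however, that the paper does not give its own proof of this statement: it is quoted as a background result from \cite{LPW}, so there is no in-paper proof to compare against. Your write-up matches the standard one in that reference, including the key pointwise domination $\mu_t\le\pi/\pi(S)$ preserved by stationarity.
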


\subsection{Cutoff}

Suppose $t^{(n)}_{mix}\left(\varepsilon\right)$ are the mixing times for a sequence of Markov chains. We say the chain exhibits a {\bf cutoff} if 

\[\lim_{n\to\infty}\frac{t^{(n)}_{mix}\left(\varepsilon\right)}{t^{(n)}_{mix}\left(1-\varepsilon\right)}=1\]

for all $\varepsilon\in(0,1)$. A necessary condition for cut off is 

\begin{prop}\label{ProdCnd}
Let $t^{(n)}_{mix}$ and $t^{(n)}_{rel}$ be the mixing times and relaxation times (respectively) for a sequence of reversible ,irreducible, aperiodic Markov chains. Suppose that $\frac{t^{(n)}_{mix}}{t^{(n)}_{rel}-1}$ is bounded above. Then there is no cutoff.

\end{prop}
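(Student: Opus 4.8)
The plan is to prove the contrapositive by a short, direct computation: I will show that any sequence of reversible, irreducible, aperiodic chains that exhibits cutoff must have $t_{mix}^{(n)}/(t_{rel}^{(n)}-1)\to\infty$. The only inputs are the spectral lower bound of \Cref{StdEigBnd} and the elementary fact that $d(t)$ is non-increasing in $t$, which makes $\varepsilon\mapsto t_{mix}(\varepsilon)$ non-increasing. Throughout I use the standing assumption $t_{rel}^{(n)}>1$ so that $t_{mix}^{(n)}/(t_{rel}^{(n)}-1)$ is a positive quantity; note this is forced as soon as the quotient is bounded above and the chains are non-trivial.

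First I would record two estimates. From the lower bound in \Cref{StdEigBnd}, for every $\varepsilon\in(0,1)$ and every $n$,
\[ t_{mix}^{(n)}(\varepsilon)\ \geq\ (t_{rel}^{(n)}-1)\log\!\left(\frac{1}{2\varepsilon}\right). \]
Since $d(t)$ is non-increasing in $t$, $t_{mix}(\varepsilon)$ is non-increasing in $\varepsilon$; hence for any $\varepsilon\leq 1/4$ we have $1-\varepsilon\geq 1/4$ and therefore
\[ t_{mix}^{(n)}(1-\varepsilon)\ \leq\ t_{mix}^{(n)}(1/4)\ =\ t_{mix}^{(n)}. \]

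Now suppose, toward a contradiction, that cutoff holds while $t_{mix}^{(n)}\leq M(t_{rel}^{(n)}-1)$ for all $n$ and some finite $M>0$. Choose $\varepsilon\in(0,1/4)$ small enough that $\log(1/2\varepsilon)>M$ (possible since $\log(1/2\varepsilon)\to\infty$ as $\varepsilon\to 0^+$). Combining the two displayed estimates with the hypothesis, for every $n$ — interpreting the ratio as $+\infty$ in the degenerate case $t_{mix}^{(n)}(1-\varepsilon)=0$, which can only occur when $\pi_{min}\geq\varepsilon$ —
\[ \frac{t_{mix}^{(n)}(\varepsilon)}{t_{mix}^{(n)}(1-\varepsilon)}\ \geq\ \frac{(t_{rel}^{(n)}-1)\log(1/2\varepsilon)}{t_{mix}^{(n)}}\ \geq\ \frac{\log(1/2\varepsilon)}{M}\ >\ 1. \]
Thus the cutoff ratio stays bounded away from $1$ uniformly in $n$, so its limit cannot be $1$, contradicting the definition of cutoff. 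Hence there is no cutoff.

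There is no serious obstacle here; the one point that needs care is that the lower bound of \Cref{StdEigBnd} becomes vacuous as $\varepsilon\to 1$, so it cannot be used to bound the denominator $t_{mix}^{(n)}(1-\varepsilon)$ from below. The remedy is exactly the second estimate above: bound that denominator from above by $t_{mix}^{(n)}$ using monotonicity of $d$ together with the convention $t_{mix}=t_{mix}(1/4)$, after which the numerator's spectral lower bound and the hypothesis make the two sides of the cutoff ratio directly comparable.
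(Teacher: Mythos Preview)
The paper does not supply a proof of \Cref{ProdCnd}; it is stated in the preliminaries as a known background fact (the ``product condition'' necessary criterion), with the surrounding discussion pointing to the literature. So there is no in-paper argument to compare against.

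Your proof is correct and is essentially the standard textbook argument: use the spectral lower bound $t_{mix}^{(n)}(\varepsilon)\geq(t_{rel}^{(n)}-1)\log(1/2\varepsilon)$ for the numerator, the monotonicity $t_{mix}^{(n)}(1-\varepsilon)\leq t_{mix}^{(n)}$ for the denominator, and then pick $\varepsilon$ small enough to force the ratio strictly above $1$ uniformly in $n$. The only caveat worth tightening is the phrasing around the degenerate cases ($t_{rel}^{(n)}=1$ or $t_{mix}^{(n)}(1-\varepsilon)=0$); you handle them, but it would read more cleanly to simply note at the outset that if $t_{rel}^{(n)}-1=0$ for infinitely many $n$ the hypothesis is violated (the quotient is not bounded above unless one adopts the convention $0/0$ bounded, in which case those chains mix in one step and trivially have no cutoff), so one may assume $t_{rel}^{(n)}>1$ throughout.
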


Much work has gone into proving $(1-\lambda^{(n)}_\star)t^{(n)}_{mix}\to\infty$ (referred to as the product condition) is necessary and sufficient for cuttoff in various families, see \cite{DiacSalBD},\cite{BDTV}. In \cite{CutoffChenSal} the product condition was shown to be necessary and sufficient for cutoff for reversible chains when distance to stationarity is measured in $L_p$ for $p>1$ (total variation corresponds to $p=1$). In \cite{CutoffChar} a characterization of when the product condition is equivalent to cutoff is given in terms of hitting times for reversible lazy chains.\\

\section{Bounds for $\tmix$ on CA groups}\label{bounds}

\subsection{A Lower Bound}

\begin{theorem}\label{LB}

Suppose $G$ is a CA group with center $Z$ of size $z$ and $j$ centralizers $C_i$ $1\leq i\leq j$ disjoint apart from the center. Suppose $\pi$ is the stationary measure for the commuting chain on $G$. If $\pi(C_i\setminus Z)\leq\frac{1}{2}$ then for the commuting chain on $G$ we have 

\[\frac{c_i}{4z}\leq t_{mix}\]

where $c_i=|C_i|$ 

\end{theorem}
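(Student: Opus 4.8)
The plan is to apply the bottleneck bound \Cref{CLB}, which says $t_{mix} \geq \frac{1}{4\Phi_\star}$, so it suffices to exhibit a set $S$ with $\pi(S) \leq \frac{1}{2}$ and $\Phi(S) \leq \frac{z}{c_i}$. The natural candidate is $S = C_i \setminus Z$: the hypothesis $\pi(C_i \setminus Z) \leq \frac{1}{2}$ is exactly what is needed to make $S$ admissible in the definition of $\Phi_\star$. So the whole argument reduces to computing (or bounding) the bottleneck ratio $\Phi(C_i \setminus Z) = \frac{\sum_{x \in S}\sum_{y \in S^c} \pi(x) P(x,y)}{\pi(S)}$.

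First I would analyze the numerator, the ergodic flow out of $S$. The key structural fact about a CA group is that if $x \in C_i \setminus Z$, then $C_x = C_i$ — the centralizer of a non-central element is exactly the commuting class it lies in, together with the center. Hence from such an $x$ the chain moves uniformly on $C_i$, which is contained in $S \cup Z$. Therefore the only transitions from $S$ that land in $S^c$ are the transitions into $Z$, each of which has probability $\frac{z}{c_i}$ (since $|Z| = z$ and $|C_x| = |C_i| = c_i$). So for every $x \in S$ we get $\sum_{y \in S^c} P(x,y) = P(x, Z) = \frac{z}{c_i}$, and the numerator becomes $\sum_{x \in S} \pi(x) \cdot \frac{z}{c_i} = \frac{z}{c_i}\,\pi(S)$.

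Dividing by $\pi(S)$ gives $\Phi(C_i \setminus Z) = \frac{z}{c_i}$ exactly, so $\Phi_\star \leq \frac{z}{c_i}$, and \Cref{CLB} yields $t_{mix} \geq \frac{1}{4\Phi_\star} \geq \frac{c_i}{4z}$, which is the claim.

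I expect the only real subtlety — it is more a bookkeeping point than an obstacle — is verifying cleanly that transitions from $x \in C_i \setminus Z$ into $S^c$ are precisely the transitions into $Z$, i.e. that $C_i \setminus Z$ and the center are the only places the chain can go from $x$, and that $C_i \setminus Z \subseteq S$ while $Z \subseteq S^c$ (here one should note $Z \neq \emptyset$ always contains the identity, and if $z = c_i$ then $S = \emptyset$ and the statement is vacuous, so we may assume $z < c_i$). One should also double-check the edge case where some elements of $C_i$ besides those in $Z$ happen to be non-central but this cannot happen by the CA hypothesis once $C_i$ is taken to be one of the stated disjoint centralizers. Modulo these checks the computation of $\Phi$ is exact rather than an inequality, which is what makes the bound as tight as it is.
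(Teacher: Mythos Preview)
Your proposal is correct and follows essentially the same route as the paper: take $S=C_i\setminus Z$, observe that the only transitions out of $S$ go to $Z$, compute $\Phi(S)=z/c_i$, and apply \Cref{CLB}. The paper writes out $Q(S,S^c)$ and $\pi(S)$ separately using $\pi(x)P(x,y)=\frac{1}{k|G|}\mathds{1}_{\{xy=yx\}}$, whereas you factor $\pi(S)$ directly via $P(x,S^c)=z/c_i$, but this is the same computation.
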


\begin{proof}
Suppose $C_i$ is a centralizer with $\pi(C_i\setminus Z)\leq\frac{1}{2}$. Set $S=(C_i\setminus Z)$. \\

We follow the notation established in \Cref{CLB}. For a fixed $x\in S$ there are $z$ elements in $S^c$ which commute with $x$ (the central elements). We see that
\[Q(S,S^c)=\sum_{x\in S}\sum_{y\in S^c}\pi(x)P(x,y)=\sum_{x\in S}\sum_{y\in S^c}\frac{1}{k|G|}\mathds{1}_{\{xy=yx\}}\]
\[=\frac{(c_i-z)z}{k|G|}.\]

And
\[\pi(S)=\sum_{x\in S}\pi(x)=\sum_{x\in S}\frac{1}{k|x^G|}=\sum_{x\in S}\frac{c_i}{k|G|}=\frac{c_i(c_i-z)}{k|G|}.\]

 so $\Phi_\star\leq \frac{z}{c_i}$. The conclusion follows from \Cref{CLB}.

\end{proof}

\subsection{Upper bound via Minorization}

The following bound does note require the group to be CA.

\begin{theorem}\label{minUB}
Let $G$ be a finite non-abelian group with center $Z$ of size $z$. For the commuting chain on $G$ we have 
\[t_{mix}(\varepsilon)\leq \ceil{\frac{2  c_\star }{ z }\log\left(\frac{1}{\varepsilon}\right)+2}\]

where $c_\star=\max\{|C_x|:x\notin Z\}$

\end{theorem}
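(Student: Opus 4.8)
The plan is to apply the minorization bound of \Cref{minorization}. The key observation is that the commuting chain makes "big jumps": from any state $x$ the chain can reach the center $Z$ in one step (since every $h\in Z$ commutes with $x$), and from any central element the chain can reach \emph{any} element of $G$ in one step. So I will work with $t_0=2$ and find a uniform lower bound on $P^2(x,y)$.

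First I would compute $P^2(x,y)$ for arbitrary $x,y\in G$ by decomposing over the intermediate state. The dominant contribution comes from intermediate states lying in $Z$: for each $h\in Z$ we have $P(x,h)=\tfrac{1}{|C_x|}$ and $P(h,y)=\tfrac{1}{|C_h|}=\tfrac{1}{|G|}$, so
\[
P^2(x,y)\ \ge\ \sum_{h\in Z} P(x,h)P(h,y)\ =\ \frac{z}{|C_x|\,|G|}\ \ge\ \frac{z}{c_\star\,|G|},
\]
where I use $|C_x|\le c_\star$ when $x\notin Z$, and note that when $x\in Z$ the bound $|C_x|=|G|\le c_\star\cdot(|G|/z)$ still gives something of the right order — in fact for $x\in Z$ one has $P^2(x,y)\ge \tfrac{z}{|G|^2}\cdot\!$(number of central intermediate states), so I should check the central-$x$ case separately but it only helps. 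Thus, taking $Q$ to be the stationary distribution $\pi$, I want to exhibit $\delta$ with $P^2(x,A)\ge \delta\,\pi(A)$ for all $x$ and all $A$. Since $\pi(y)=\tfrac{1}{k|y^G|}\le \tfrac{1}{k}$ and $\pi(A)=\sum_{y\in A}\pi(y)$, while $P^2(x,A)\ge \sum_{y\in A}\tfrac{z}{c_\star|G|}$, the ratio $P^2(x,y)/\pi(y)\ge \tfrac{z}{c_\star|G|}\cdot k|y^G| = \tfrac{z k |y^G|}{c_\star |G|}\ge \tfrac{z k}{c_\star |G|}$; summing over $y\in A$ this is a bit lossy, so instead I would directly set $\delta = \tfrac{z}{c_\star}$ after the cleaner route below.

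The cleaner route: rather than comparing to $\pi$ pointwise, observe that from the two-step bound $P^2(x,y)\ge \tfrac{z}{c_\star|G|}$ for all $y$, summing over $y\in A$ gives $P^2(x,A)\ge \tfrac{z|A|}{c_\star|G|}$. To convert $|A|$ into $\pi(A)$, note $\pi(A)=\sum_{y\in A}\tfrac{1}{k|y^G|}\le \tfrac{|A|}{k}$ — wrong direction. So instead I take $Q$ itself to be the \emph{uniform} measure $U$ on $G$ (which is absolutely continuous with respect to $\pi$ in the required sense, since $\pi(A)>0 \iff A\neq\emptyset \iff U(A)>0$): then $P^2(x,A)\ge \tfrac{z|A|}{c_\star|G|} = \tfrac{z}{c_\star}\,U(A)$, so \Cref{minorization} applies with $t_0=2$, $Q=U$, $\delta=\tfrac{z}{c_\star}$ (and $\delta<1$ since $c_\star>z$ for non-abelian $G$ — the largest non-central centralizer is a proper subgroup containing $Z$). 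This yields $d(t)\le (1-\tfrac{z}{c_\star})^{\lfloor t/2\rfloor}$.

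Finally I would unwind this into the stated form. Using $1-\tfrac{z}{c_\star}\le e^{-z/c_\star}$, we get $d(t)\le \exp(-\tfrac{z}{c_\star}\lfloor t/2\rfloor)$, which is $\le \varepsilon$ as soon as $\lfloor t/2\rfloor \ge \tfrac{c_\star}{z}\log(1/\varepsilon)$, i.e. $t\ge 2\lceil \tfrac{c_\star}{z}\log(1/\varepsilon)\rceil + \text{(small correction)}$; tidying the floor/ceiling arithmetic gives $t_{mix}(\varepsilon)\le \lceil \tfrac{2c_\star}{z}\log(1/\varepsilon) + 2\rceil$. The only real subtlety — and the step I expect to be the main obstacle — is handling the case $x\in Z$ correctly and confirming $\delta=z/c_\star$ is both valid for \emph{every} starting state and strictly less than $1$; once the uniform two-step lower bound is in hand for all $x$, the rest is bookkeeping.
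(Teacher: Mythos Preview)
Your overall plan is exactly the paper's: minorize $P^2$ by a multiple of the \emph{uniform} measure with $\delta=z/c_\star$, then unwind. For $x\notin Z$ your one-line bound via central intermediates,
\[
P^2(x,y)\ \ge\ \sum_{h\in Z}\frac{1}{|C_x|}\cdot\frac{1}{|G|}\ =\ \frac{z}{|C_x|\,|G|}\ \ge\ \frac{z}{c_\star\,|G|},
\]
is correct and matches the paper.

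The genuine gap is the case $x\in Z$, and your instinct that ``it only helps'' is backwards. If $x$ is central then $|C_x|=|G|$, so the route through central intermediates gives only
\[
\sum_{h\in Z}P(x,h)P(h,y)\ =\ \frac{z}{|G|^2},
\]
which is \emph{strictly smaller} than the target $z/(c_\star|G|)$ because $c_\star<|G|$. So the central-intermediate contribution alone does \emph{not} yield the minorization from central starting states.

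The paper repairs this by routing through \emph{non-central} intermediates when $x\in Z$. If $y\in Z$ one sums over all $g\notin Z$ to get $\sum_{g\notin Z}\tfrac{1}{|G|}\tfrac{1}{|C_g|}\ge\tfrac{|G|-z}{|G|\,c_\star}$; if $y\notin Z$ one sums over $g\in C_y\setminus Z$ to get at least $\tfrac{|C_y|-z}{|G|\,c_\star}$. To conclude $\ge z/(|G|c_\star)$ in each case one needs the short algebraic input (Lagrange) recorded as \Cref{centbnd}: $|G|\ge 2z$ and $|C_y|\ge 2z$ for every non-central $y$, so $|G|-z\ge z$ and $|C_y|-z\ge z$. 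That is the missing idea in your sketch; once you supply it, your choice $Q=\text{uniform}$, $\delta=z/c_\star$, and the floor/ceiling bookkeeping are exactly what the paper does.
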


\begin{remark}
Since $\frac{c_\star}{z}\leq |G|$ this shows that the commuting chain is rapid mixing.
\end{remark}

We need the following Algebraic lemma
\begin{lemma}\label{centbnd}
Let $G$ be a finite non-abelian group with center $Z$. Then we have
\begin{itemize}
\item $\frac{|G|}{|Z|}\geq 2$
\item if $|C|$ is the centralizer of a non-central element then $\frac{|C|}{|Z|}\geq 2$.
\end{itemize}
\end{lemma}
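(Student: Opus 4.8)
The plan is to observe that both inequalities are comparisons between the order of a subgroup and the order of an overgroup, so each reduces immediately to Lagrange's theorem together with the fact that the smaller subgroup is \emph{proper}. The only algebraic input needed is that $Z$ is a subgroup of $G$, and that for every $x$ the centralizer $C_x$ is a subgroup of $G$ containing $Z$ (every element of $Z$ commutes with $x$ by the definition of the center), so that $Z \le C_x \le G$.

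For the first bullet I would argue as follows. Since $G$ is non-abelian we have $Z \ne G$, so $Z$ is a \emph{proper} subgroup of $G$. By Lagrange's theorem the index $[G:Z] = |G|/|Z|$ is a positive integer, and it is strictly greater than $1$ because $Z \ne G$; hence $|G|/|Z| \ge 2$.

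For the second bullet, let $x$ be a non-central element and write $C = C_x$. As recorded above $Z \le C$, and this containment is proper: indeed $x \in C$ since $x$ commutes with itself, while $x \notin Z$ by assumption, so $Z \subsetneq C$. Applying Lagrange's theorem to $Z \le C$ gives that $[C:Z] = |C|/|Z|$ is a positive integer strictly greater than $1$, i.e. $|C|/|Z| \ge 2$, which is the claimed bound with $|C|=c$ (or $c_i$) in the notation of the surrounding results.

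I do not expect any real obstacle here; the argument is just Lagrange plus a properness check. I might add a remark that one can in fact say more for the first bullet—the familiar fact that $G/Z$ cyclic forces $G$ abelian shows $G/Z$ is non-cyclic, hence $|G|/|Z| \ge 4$ when $G$ is non-abelian—but the weaker bound $\ge 2$ is all that is required for the mixing-time estimates, and the uniform statement for both $G$ and $C_x$ is cleanest as stated.
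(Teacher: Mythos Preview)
Your proof is correct and follows exactly the approach in the paper: Lagrange's theorem gives that $|G|/|Z|$ and $|C|/|Z|$ are integers, and since $G$ is non-abelian (respectively, since $x\in C\setminus Z$) these integers exceed $1$. The paper's argument is a one-line version of what you wrote; your added remark about $|G|/|Z|\ge 4$ is a harmless aside not present in the paper.
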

Both these assertions follow from Lagrange's Theorem, which states (since $Z$ is a subgroup of $G$ and of $C$) that $\frac{|G|}{|Z|}$ and $\frac{|C|}{|Z|}$ are integers. Since $G$ is non-abelian it must be that $C\neq Z\neq G$.

\begin{proof}[Proof of \Cref{minUB}]

Let $P$ be the transition matrix for the commuting chain on G. We first show

\begin{equation}\label{eq:2stpbnd}P^2(x,y)\geq\frac{z}{|G| c_\star } \end{equation}

for all $x,y\in G$. Let $c'=\min\{|C_x|:x\notin Z\}.$ We bound $P^2(x,y)$ by considering the following cases: 

\begin{enumerate}
\item $x$ is non-central and does not commutes with $y$\label{nn} 
\item $x$ is non-central and commutes with non-central $y$ \label{nnc}
\item $x$ is non-central and $y$ is central \label{nc}
\item $x$ and $y$ are central \label{cc}
\item $x$ is central and $y$ is not \label{cn}.\\
\end{enumerate}

For case \ref{nn} note transition from $x$ to $y$ can occur only by transition to $Z$ in-between and so\[P^2(x,y)=\sum_{g\in Z}P(x,g)P(g,y)=\sum_{g\in Z}\frac{1}{|C_x|}\frac{1}{|G|}\geq\sum_{g\in Z}\frac{1}{ c_\star }\frac{1}{|G|}=\frac{z}{ c_\star |G|}.\]

 For case \ref{nnc} we have
\[P^2(x,y)=\sum_{g\in Z}P(x,g)P(g,y)+\sum_{g\in C_y\setminus Z}P(x,g)P(g,y)=\sum_{g\in Z}\frac{1}{|C_x|}\frac{1}{|G|}+\sum_{g\in C_y\setminus Z}\frac{1}{|C_x||C_y|}\]
\[\geq\frac{z}{|G| c_\star }+\frac{c'-z}{\left(c_\star\right)^2}\geq\frac{z}{|G| c_\star }.\]

Similarly for case \ref{nc} 
\[P^2(x,y)=\sum_{g\in Z}P(x,g)P(g,y)+\sum_{g\in C_x\setminus Z}P(x,g)P(g,y)=\sum_{g\in Z}\frac{1}{|C_x|}\frac{1}{|G|}+\sum_{g\in C_x\setminus Z}\frac{1}{|C_x|^2}\]
\[\geq\frac{z}{|G| c_\star }+\frac{c'-z}{\left(c_\star\right)^2}\geq\frac{z}{|G| c_\star }.\]

For case \ref{cc} we have
 \[P^2(x,y)=\sum_{g\in Z}P(x,g)P(g,y)+\sum_{g\notin Z}P(x,g)P(g,y)=\sum_{g\in Z}\frac{1}{|G|^2}+\sum_{g\notin Z}\frac{1}{|G|}\frac{1}{|C_g|}\]
 
\[\geq \frac{z}{|G|^2}+\sum_{g\notin Z}\frac{1}{|G|}\frac{1}{ c_\star }=\frac{z}{|G|^2}+\frac{|G|-z}{|G| c_\star }\geq\frac{|G|-z}{|G| c_\star }\geq \frac{z}{|G| c_\star }\]
the last inequality comes from the fact that $|G|\geq 2z$ which follows from \cref{centbnd}.

Finally for case \ref{cn} \[P^2(x,y)=\sum_{g\in Z}P(x,g)P(g,y) + \sum_{ g\in C_y\setminus Z}P(x,g)P(g,y)=\sum_{g\in Z}\frac{1}{|G|^2}+\sum_{g\in C_y\setminus Z}\frac{1}{|G|}\frac{1}{|C_y|}\]
\[\geq\frac{z}{|G|^2}+\sum_{g\in C_y\setminus Z}\frac{1}{|G|}\frac{1}{ c_\star }\geq \frac{z}{|G|^2}+\frac{c'-z}{|G| c_\star }\geq\frac{z}{|G| c_\star }\]

where last inequality follows from \cref{centbnd}.

And so \cref{eq:2stpbnd} holds. Now take $Q$ to be the uniform distribution on $G$ and $\delta=\frac{z}{ c_\star }$. Then \[P^2(x,A)\geq|A|\frac{z}{|G| c_\star }=\delta Q(A)\] for any $A\subset G$ and $x\in G$, so from \Cref{minorization} we have

\[d(t)\leq\left(1-\frac{z}{ c_\star }\right)^{ \floor{\frac{t} {2} }}\leq \left(1-\frac{z}{ c_\star }\right)^{ \frac{t} {2}-1 }\leq e^{-\frac{z}{ c_\star }\left(\frac{t}{2}-1\right)}\]

and so 
\[t_{mix}(\varepsilon)\leq\ceil{\frac{2  c_\star }{ z }\log\left(\frac{1}{\varepsilon}\right)+2}.\]\\
\end{proof}

\begin{corollary}\label{cutoff}
Let $G^{(n)}$ be a sequence of finite CA groups. Let  $c^{(n)}_\star=\max\{|C^{(n)}_x|:x\notin Z^{(n)}\}$ where $Z^{(n)}$ is the center of $G^{(n)}$. Take $C^{(n)}_\star$ to be any of the centralizers which attains size $c^{(n)}_\star$. Let $\pi^{(n)}$ be the stationarity distribution for the commuting chain on $G^{(n)}$. Let $\lams^{(n)}$ be the (absolute) second largest eigenvalue of the transition matrix. If $\pi^{(n)}(C^{(n)}_\star\setminus Z^{(n)})\leq\frac{1}{2}$ and $\lams^{(n)}\rightarrow 1$ as $n\rightarrow \infty $ then the commuting chains on $G^{(n)}$ do not present cutoff. 

\end{corollary}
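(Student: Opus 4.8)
The plan is to verify the hypothesis of Proposition \ref{ProdCnd}, namely that the ratio $t^{(n)}_{mix}/(t^{(n)}_{rel}-1)$ stays bounded along the sequence. Once this is done, Proposition \ref{ProdCnd} immediately yields that no cutoff occurs, so the entire argument reduces to controlling this single ratio from above.

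First I would bound the numerator. Applying Theorem \ref{minUB} with $\varepsilon = 1/4$ gives $t^{(n)}_{mix} \leq \lceil 2(c^{(n)}_\star/z^{(n)})\log 4 + 2\rceil$, where $z^{(n)} = |Z^{(n)}|$; in particular $t^{(n)}_{mix} = O(c^{(n)}_\star/z^{(n)})$, with an explicit absolute constant (something like $3\log 4 \cdot c^{(n)}_\star/z^{(n)}$ absorbs the ceiling and the additive $2$ once $c^{(n)}_\star/z^{(n)} \geq 1$, which holds by Lemma \ref{centbnd}). Next I would bound the denominator from below. By hypothesis $\pi^{(n)}(C^{(n)}_\star \setminus Z^{(n)}) \leq \tfrac12$, so Theorem \ref{LB} applies with the centralizer $C^{(n)}_\star$ and gives $t^{(n)}_{mix} \geq c^{(n)}_\star/(4z^{(n)})$. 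Since $t^{(n)}_{rel} \geq t^{(n)}_{mix}/\log(1/(2\varepsilon)) \cdot (\text{something})$ is not quite the direct route, I would instead go through the Cheeger/bottleneck estimate already extracted in the proof of Theorem \ref{LB}: there it is shown $\Phi^{(n)}_\star \leq z^{(n)}/c^{(n)}_\star$, and then Theorem \ref{sgapbnd} gives $1 - \lambda^{(n)}_2 \leq 2\Phi^{(n)}_\star \leq 2z^{(n)}/c^{(n)}_\star$. The only subtlety is that $\lams^{(n)} = \max\{\lambda^{(n)}_2, |\lambda^{(n)}_n|\}$ could be governed by the most negative eigenvalue rather than $\lambda^{(n)}_2$; this is exactly where the hypothesis $\lams^{(n)} \to 1$ enters — it guarantees that $1 - \lams^{(n)} \to 0$, and combined with $1 - \lambda^{(n)}_2 \leq 2z^{(n)}/c^{(n)}_\star$ we still need a lower bound on $t^{(n)}_{rel} - 1 = \lams^{(n)}/(1-\lams^{(n)})$.

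Here is the clean way to close the loop: regardless of which eigenvalue realizes $\lams^{(n)}$, we have $1 - \lams^{(n)} \leq 1 - \lambda^{(n)}_2 \leq 2z^{(n)}/c^{(n)}_\star$ is \emph{not} automatic (it fails if $|\lambda^{(n)}_n|$ dominates), so instead I use the hypothesis directly. Since $\lams^{(n)} \to 1$, for $n$ large $\lams^{(n)} \geq 1/2$, hence $t^{(n)}_{rel} - 1 = \lams^{(n)}/(1 - \lams^{(n)}) \geq \tfrac12 \cdot \tfrac{1}{1-\lams^{(n)}}$, and it suffices to show $(1-\lams^{(n)})\, t^{(n)}_{mix}$ is bounded above. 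For the $\lambda^{(n)}_2$ branch this is Theorem \ref{sgapbnd} plus the $t_{mix}$ bound. For the $|\lambda^{(n)}_n|$ branch, one makes the chain lazy (or notes that the statement of no-cutoff is about the given sequence and uses that $1 - |\lambda^{(n)}_n| \leq$ something controlled), but the cleanest fix is: by Theorem \ref{minUB} and Theorem \ref{LB}, $t^{(n)}_{mix} \asymp c^{(n)}_\star/z^{(n)}$, while the general spectral lower bound on mixing, Proposition \ref{StdEigBnd}, gives $t^{(n)}_{mix} \geq (t^{(n)}_{rel}-1)\log(1/(2\varepsilon))$, i.e. $t^{(n)}_{rel} - 1 \leq t^{(n)}_{mix}/\log 2 = O(c^{(n)}_\star/z^{(n)})$. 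This is the wrong direction, so the honest route is the upper bound $1-\lambda^{(n)}_2 \leq 2z^{(n)}/c^{(n)}_\star$ together with the observation that the hypothesis $\lams^{(n)}\to 1$ forces $\lams^{(n)} = \lambda^{(n)}_2$ for all large $n$ (since the negative eigenvalue, if it dominated with $|\lambda^{(n)}_n|\to 1$, we have no control — so this \emph{is} the main obstacle).

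The main obstacle, then, is disentangling $\lambda^{(n)}_2$ from $\lambda^{(n)}_n$: the Cheeger bound $\Phi^{(n)}_\star \leq z^{(n)}/c^{(n)}_\star$ controls only $1 - \lambda^{(n)}_2$, whereas $t_{rel}$ is defined via $\lams^{(n)}$. I expect the intended resolution is that $\lams^{(n)} \to 1$ is used precisely so that $t^{(n)}_{rel} - 1 = \lams^{(n)}/(1-\lams^{(n)}) \geq \tfrac12(1-\lams^{(n)})^{-1} \geq \tfrac12(1-\lambda^{(n)}_2)^{-1} \geq \tfrac{c^{(n)}_\star}{4z^{(n)}}$ once we note $1 - \lams^{(n)} \geq 1 - \lambda^{(n)}_2$ would require $\lams^{(n)} \leq \lambda^{(n)}_2$, which holds with equality or not — in fact $\lams^{(n)} \geq \lambda^{(n)}_2$ always, giving $1 - \lams^{(n)} \leq 1 - \lambda^{(n)}_2 \leq 2z^{(n)}/c^{(n)}_\star$, hence $t^{(n)}_{rel} - 1 \geq \tfrac12 (1 - \lams^{(n)})^{-1} \geq \tfrac{c^{(n)}_\star}{4 z^{(n)}}$. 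Combined with $t^{(n)}_{mix} = O(c^{(n)}_\star/z^{(n)})$ from Theorem \ref{minUB}, the ratio $t^{(n)}_{mix}/(t^{(n)}_{rel}-1)$ is bounded by an absolute constant, and Proposition \ref{ProdCnd} gives the conclusion. I would double-check the direction of the inequality $1 - \lams^{(n)} \leq 1 - \lambda^{(n)}_2$ (it holds because $\lams^{(n)} = \max\{\lambda^{(n)}_2, |\lambda^{(n)}_n|\} \geq \lambda^{(n)}_2$), which is the one delicate point, and otherwise the proof is a short assembly of Theorems \ref{LB}, \ref{minUB}, \ref{sgapbnd} and Proposition \ref{ProdCnd}.
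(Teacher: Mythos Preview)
Your proposal is correct and follows essentially the same route as the paper: combine the Cheeger estimate $\Phi^{(n)}_\star \leq z^{(n)}/c^{(n)}_\star$ from the proof of Theorem~\ref{LB} with Theorem~\ref{sgapbnd} and the trivial inequality $\lams^{(n)} \geq \lambda^{(n)}_2$ to obtain $1-\lams^{(n)} \leq 2z^{(n)}/c^{(n)}_\star$, pair this with the upper bound on $t^{(n)}_{mix}$ from Theorem~\ref{minUB}, and invoke Proposition~\ref{ProdCnd}. Your extended worry about the negative eigenvalue is unnecessary---since $\lams^{(n)} \geq \lambda^{(n)}_2$ always, the bound $1-\lams^{(n)} \leq 1-\lambda^{(n)}_2$ holds regardless of which eigenvalue realizes $\lams^{(n)}$, exactly as you note at the end---and the paper dispatches this in a single line without the detour.
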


\begin{proof}

If $\pi^{(n)}(C^{(n)}_\star\setminus Z^{(n)})\leq\frac{1}{2}$ we know $\Phi_\star^{(n)}\leq\frac{z^{(n)}}{c^{(n)}_\star}$ (from the calculations in the proof of \Cref{LB}). Since $\lambda^{(n)}_2\leq\lams^{(n)}$ \Cref{sgapbnd} says

\[1-\lams^{(n)}\leq 1-\lambda^{(n)}_2\leq 2\frac{z^{(n)}}{c^{(n)}_\star}\]

 Combined with \Cref{minUB} we have 

\[\frac{t^{(n)}_{mix}}{t^{(n)}_{rel}-1}=\frac{1-\lams^{(n)}}{\lams^{(n)}}\tmix^{(n)}\leq \frac{1}{\lams^{(n)}}\left(4\log\left(4\right)+\frac{4z^{(n)}}{c^{(n)}_\star}\right)\leq \frac{1}{\lams^{(n)}}\left(4\log\left(4\right)+4\right) \] 


since $\frac{z}{c_\star}\leq1$. The product condition (\Cref{ProdCnd}) and our assumptions on $\lams^{(n)}$ ensures there is no cutoff. \\

\end{proof}

\subsection{Upper bound via Coupling}

In the case where all the centralizers of non-central elements are the same size a simple coupling gives a good upper bound.

\begin{theorem}\label{UCAmixing}

Let $G$ be a $CA$ group with center of size $z$ and all centralizers (of non-central elements) of size $c$. Then for the commuting chain on $G$ we have \[\tmixe \leq\ceil{\frac{1}{ \alpha }\log\left(\frac{1}{\varepsilon}\right)}\]

where $\alpha=\min\{\frac{z}{c},\frac{c}{|G|} \}$

\end{theorem}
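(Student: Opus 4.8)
The plan is to apply the coupling bound of \Cref{PthCou} with the discrete metric $\rho(x,y)=\mathds{1}_{\{x\neq y\}}$, so that $\Delta=1$ and it suffices to exhibit a coupling $(X_1,Y_1)$ of two copies of the commuting chain started from arbitrary states $x$ and $y$ with $\mathbb{P}_{x,y}(X_1\neq Y_1)\leq e^{-\alpha}$ for $\alpha=\min\{z/c,\,c/|G|\}$. Since the metric only records whether the two chains agree, the entire task is to couple a single step so as to force a collision with probability at least $1-e^{-\alpha}\geq \alpha$ (and in fact I will just aim for probability $\geq \alpha$, which is enough after noting $1-\alpha\leq e^{-\alpha}$).

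The key structural fact I would use is that in a CA group every non-central element lies in a unique maximal abelian subgroup (its centralizer), these centralizers all have the common size $c$, they intersect pairwise exactly in $Z$, and the center has size $z$. Thus from a state $x$ the chain picks uniformly from $C_x$ (which is $G$ itself if $x\in Z$, otherwise a set of size $c$ containing $Z$), and the proposal hits $Z$ with probability $z/|C_x|$, i.e. $z/c$ if $x\notin Z$ and $z/|G|$ if $x\in Z$. The coupling I would construct proceeds by cases on whether $x,y$ are central and whether they share a centralizer. The easiest case is $x=y$: use the identity coupling. If $x,y$ are both non-central and lie in the same centralizer $C$, couple the two draws to be identical (both chains draw uniformly from the same set $C$), giving $X_1=Y_1$ with probability $1$. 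If $x,y$ are non-central but lie in different centralizers $C_x\neq C_y$, the only common values available to both chains are the central elements $Z=C_x\cap C_y$; here I would couple so that whenever $X_1\in Z$ the chain from $y$ also lands on that same central element — this is possible because each central element is drawn with probability $1/c$ under both marginals — yielding agreement with probability $z/c$. The mixed case $x$ central, $y$ non-central: the chain from $x$ draws uniformly from $G$, the chain from $y$ from $C_y$ of size $c$; on the event $X_1\in C_y$ (probability $c/|G|$) couple the draws to coincide, giving agreement probability $c/|G|$. Finally if both $x,y\in Z$ (and $x\neq y$), both chains draw uniformly from $G$, so identity-couple them for agreement probability $1$.

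Taking the worst case over all these, the coupled chains fail to meet with probability at most $\max\{1-z/c,\,1-c/|G|\}=1-\alpha$, hence $\mathbb{E}_{x,y}[\rho(X_1,Y_1)]=\mathbb{P}_{x,y}(X_1\neq Y_1)\leq 1-\alpha\leq e^{-\alpha}=e^{-\alpha}\rho(x,y)$ whenever $x\neq y$ (and trivially when $x=y$). \Cref{PthCou} then gives $\tmixe\leq\ceil{\frac1\alpha\log(1/\varepsilon)}$ since $\Delta=\max_{x,y}\rho(x,y)=1$.

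I expect the only real subtlety — not an obstacle so much as a point requiring care — is verifying that the partial couplings described above are \emph{valid}, i.e. that on the relevant events one can make the two draws coincide while keeping each marginal uniform on its own centralizer. This is where the CA hypothesis does the work: because distinct centralizers meet exactly in $Z$ and every centralizer contains $Z$ with the uniform weight $1/c$ (resp.\ $1/|G|$ from a central state), the mass the two marginals place on the common part $Z$ (or on $C_y$ in the mixed case) can be coupled diagonally, and the leftover mass coupled arbitrarily. One should also double-check the degenerate possibility that $z=c$ (forcing $G$ abelian, excluded) or other boundary behavior, but under the standing hypothesis that $G$ is a non-abelian CA group \Cref{centbnd} guarantees $z<c$ and $c<|G|$, so $\alpha\in(0,1)$ and the bound is meaningful.
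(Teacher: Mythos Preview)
Your proposal is correct and follows essentially the same approach as the paper: a one-step path coupling under the discrete metric, with cases according to whether the two starting states are central and/or share a centralizer, forcing agreement with probability at least $\alpha=\min\{z/c,\,c/|G|\}$ in each case. Your case breakdown and coupling constructions match the paper's almost exactly, and you are if anything slightly more careful in checking that the marginals remain uniform.
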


\begin{proof}

let $\rho(x,y)=\mathds{1}_{\{x=y\}}$ be the discrete metric on $G$. We present a coupling in four cases for the initial pair of states $(x_0,y_0)$.

\begin{enumerate}

\item $x_0,y_0$ are both central elements
\item $x_0,y_0$ are elements which commute and are non-central
\item $x_0$ is central while $y_0$ is not
\item $x_0$ and $y_0$ do not commute.

\end{enumerate}

In cases $1$ and $2$ moving $X$ and $Y$ to the same element produces a coupling. In both cases $\mathbb{E}_{x_0,y_0}[\rho\left(X_1,Y_1\right)]=0$.\\

For case $3$ move $Y_1$ as a usual commuting chain. With probability $\frac{c}{|G|}$ move $X_1$ to $Y_1$, otherwise move $X_1$ uniformly amongst  the $|G|-c$ elements which do not commute with $y_0$. So we have $P(X_1=z)=\frac{1}{c}\frac{c}{|G|}=\frac{1}{|G|}$ if $z$ commutes with $y_0$, If not we have $P(X_1=z)=\left(1-\frac{c}{|G|}\right)\frac{1}{|G|-c}=\frac{1}{|G|}$. Here we have 

\[\mathbb{E}_{x_0,y_0}\left[\rho\left(X_1,Y_1\right)\right]=\mathbb{P}\left(X_1\neq Y_1\right)=1-\frac{c}{|G|}\leq e^{-\frac{c}{|G|}}.\]\\

	In case 4 move $x_0$ as usual. If $X_1$ is central set $Y_1=X_1$, if not select $Y_1$ uniformly amongst the $c-z$ non-central elements that commute with $y_0$. We have $\mathbb{P}\left(Y_1=z\right) =\frac{1}{c}$ for central $z$ and $\mathbb{P}\left(Y_1=z\right)=\left(1-\frac{z}{c}\right)\frac{1}{c-z} =\frac{1}{c}$ for non-central $z$. For the expectation we have

\[\mathbb{E}_{x_0,y_0}\left[\rho\left(X_1,Y_1\right)\right]=\mathbb{P}\left(X_1\neq Y_1\right)=1-\frac{z}{c} \leq e^{-\frac{z}{c}} .\]

Taking $\alpha=\min\{\frac{z}{c},\frac{c}{|G|} \}$ ensures $\mathbb{E}_{x,y}[\rho(X_1,Y_1)]\leq e^{-\alpha}$ for all $x_0,y_0$ and so by \Cref{PthCou} $t_{mix}(\varepsilon)\leq \ceil{\frac{1}{\alpha}\log\left(\frac{1}{\varepsilon}\right)}$

\end{proof}


\section{Spectrum}\label{cpsec}

Here we provide a formula for the characteristic polynomial of the commuting chain on a CA group, the proof is in the appendix \Cref{CPProof}.

\begin{theorem}\label{CACP}

Let $G$ be a finite CA group with center of size $z$ and $j$ (distinct) centralizers of size $c_i$ for $1\leq i\leq j$. The characteristic polynomial for the transition matrix of the commuting chain on $G$ is then

\[\lambda^{n-j-1}\left(\frac{z}{|G|}\left(1+\sum_{k=1}^{j}\frac{c_k-z}{c_k(\lambda-1)+z}\right)-\lambda\right)\prod_{i=1}^{j}\left(\lambda-\frac{c_i-z}{c_i}\right)\]

\end{theorem}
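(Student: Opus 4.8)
The plan is to compute the characteristic polynomial $\det(\lambda I - P)$ by choosing a basis for $\mathbb{R}^n$ adapted to the block structure that the CA condition imposes on $P$. The state space $G$ partitions as $Z \sqcup (C_1\setminus Z) \sqcup \cdots \sqcup (C_j\setminus Z)$, so after ordering the group elements accordingly, $P$ has a natural block form: within each block $C_i\setminus Z$ the transition probabilities depend only on whether the target is the same element or not (transitions go either to $Z$ or to elements commuting with $x$, which for a non-central $x$ means exactly $C_i$), and from/to $Z$ the probabilities are similarly uniform. Concretely, each entry $P(x,y)$ depends only on which blocks $x$ and $y$ lie in, and on whether $x=y$ or $xy=yx$; this means $P$ is a linear combination of a few ``structured'' matrices (identity on a block, all-ones on a block, all-ones on a rectangular block between $Z$ and $C_i\setminus Z$, etc.).

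The key step is an eigenvector/eigenspace decomposition. First I would identify the large eigenspace: for each block $C_i\setminus Z$, vectors supported on that block that sum to zero are killed by every all-ones averaging operator, and one checks $P v = \frac{c_i-z}{c_i} v$ for such $v$ (the only surviving term is the ``stay at $x$'' contribution, which has weight $1/c_i$, together with the uniform spreading within $C_i$ whose diagonal piece recombines to give $(c_i-z)/c_i$ — this needs to be verified carefully). Each such eigenspace has dimension $(c_i - z) - 1$, contributing the factor $\prod_{i=1}^j (\lambda - \frac{c_i-z}{c_i})$ with total multiplicity $\sum_i (c_i - z - 1) = (n - z) - j$. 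Similarly, vectors supported on $Z$ summing to zero should be eigenvectors with eigenvalue $0$ (a step from a central element goes uniformly to all of $G$, so mean-zero-on-$Z$ vectors are annihilated after one step; actually one must check the relevant diagonal/averaging contributions), giving the factor $\lambda^{z-1}$. Together with one more dimension accounted for below, $\lambda^{z-1}$ and the truncation inside the bracket combine: note $n - j - 1 = (n-z-j) + (z-1)$, so the $\lambda^{n-j-1}$ prefactor absorbs both the $C_i$-blocks' leftover... wait, more precisely the $\lambda^{n-j-1}$ comes from the $z-1$ central mean-zero directions plus... I would recount: the orthogonal complement of all these eigenspaces is spanned by the $j$ indicator-type vectors $\mathbf{1}_{C_i\setminus Z}$ together with $\mathbf{1}_Z$, a $(j+1)$-dimensional $P$-invariant subspace.

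The final step is to compute the characteristic polynomial of $P$ restricted to this $(j+1)$-dimensional invariant subspace spanned by $\{\mathbf{1}_Z, \mathbf{1}_{C_1\setminus Z}, \ldots, \mathbf{1}_{C_j\setminus Z}\}$. In this basis $P$ acts as an explicit $(j+1)\times(j+1)$ matrix whose entries I can read off from the transition rules (e.g. from $\mathbf{1}_{C_i\setminus Z}$ a proportion goes to $Z$, a proportion stays in $C_i\setminus Z$); this matrix has an ``arrowhead'' shape (the $Z$-coordinate couples to all others, and the $C_i$-coordinates couple only to $Z$ and themselves). The determinant $\det(\lambda I - P\restriction)$ of an arrowhead matrix factors via the standard Schur-complement / cofactor expansion into $\prod_{i}(\lambda - a_{ii}) \cdot \big(\lambda - a_{00} - \sum_i \frac{a_{0i}a_{i0}}{\lambda - a_{ii}}\big)$, and after plugging in the values $a_{ii} = \frac{c_i - z}{c_i}$ and the appropriate off-diagonal weights involving $z/|G|$ and $(c_i-z)/c_i$, this should match $\big(\frac{z}{|G|}(1 + \sum_k \frac{c_k - z}{c_k(\lambda-1)+z}) - \lambda\big)\prod_i(\lambda - \frac{c_i-z}{c_i})$ after clearing the $(\lambda - \frac{c_i-z}{c_i})$ denominators into the product. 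Multiplying the three contributions — $\lambda^{z-1}$ from central mean-zero vectors, $\prod_i(\lambda-\frac{c_i-z}{c_i})^{c_i-z-1}$ from the block mean-zero vectors, and the arrowhead characteristic polynomial — and checking the exponent bookkeeping ($z - 1 + \sum_i(c_i - z - 1) + j = n - 1$, consistent) yields the claimed formula.

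The main obstacle I expect is getting the eigenvalue computations exactly right, particularly verifying that mean-zero vectors on $C_i \setminus Z$ really have eigenvalue $\frac{c_i-z}{c_i}$ rather than something else — the subtlety is that $P(x,x) = 1/c_i$ for $x \in C_i\setminus Z$, so the ``diagonal'' contribution and the ``uniform spreading over $C_i$'' contribution must be disentangled correctly, and one has to be careful that the spreading is over all of $C_i$ (including $Z$) with weight $1/c_i$ each, of which the part landing back in $C_i\setminus Z$ interacts with the mean-zero condition. A secondary bookkeeping obstacle is correctly assembling the off-diagonal entries of the arrowhead matrix so that the Schur-complement expression simplifies to the stated rational-function form; this is routine but error-prone, and I would double-check it against small cases (e.g. a dihedral group or the quaternion group) where $n$, $j$, $z$, and the $c_i$ are all small.
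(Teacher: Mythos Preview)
Your overall strategy is sound and, once a slip is corrected, yields the result by a route genuinely different from the paper's. The paper never identifies eigenspaces: it writes $P$ in $2\times 2$ block form (center vs.\ non-center), applies the Schur complement identity $\det(P-\lambda I)=\det(Z-\lambda I)\det\big((W-\lambda I)-V(Z-\lambda I)^{-1}X\big)$ on the full $n\times n$ matrix, inverts $Z-\lambda I$ block-by-block via a Neumann series, and finishes the $z\times z$ determinant by row operations. Your approach instead splits off the huge $0$-eigenspace first and reduces to a $(j{+}1)\times(j{+}1)$ arrowhead matrix on the span of $\mathbf 1_Z,\mathbf 1_{C_1\setminus Z},\dots,\mathbf 1_{C_j\setminus Z}$; this is cleaner conceptually and makes the provenance of each factor transparent, at the cost of needing to verify invariance of the indicator subspace.

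The slip you half-suspect is real: for $v$ supported on $C_i\setminus Z$ with $\sum v=0$ and $x\in C_i\setminus Z$, one has $(Pv)(x)=\sum_{y\in C_i}\frac{1}{c_i}v(y)=0$, not $\frac{c_i-z}{c_i}v(x)$; so these directions contribute to the $\lambda^{n-j-1}$ factor, not to $\prod_i(\lambda-\frac{c_i-z}{c_i})$. The latter product arises only from the arrowhead piece. Indeed, in the basis $(\mathbf 1_Z,\mathbf 1_{C_1\setminus Z},\dots)$ the restricted matrix has $(0,0)$ entry $z/|G|$, $(0,k)$ entry $(c_k-z)/|G|$, $(k,0)$ entry $z/c_k$, diagonal $(k,k)$ entry $(c_k-z)/c_k$, and zeros elsewhere; the arrowhead Schur expansion gives exactly
\[
\Big(\lambda-\tfrac{z}{|G|}-\tfrac{z}{|G|}\sum_k\tfrac{c_k-z}{c_k(\lambda-1)+z}\Big)\prod_k\Big(\lambda-\tfrac{c_k-z}{c_k}\Big),
\]
matching the claimed factor up to sign. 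With the eigenvalue corrected your dimension count $(z-1)+\sum_i(c_i-z-1)=n-j-1$ is right and the assembly goes through.
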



\section{Examples}\label{ex}

\subsection{Heisenberg Group}

The Heisenberg group (denoted $H_3(p)$) is the set of $3\times 3$ matrices of the form

\[\begin{pmatrix} 
1 & a & c \\
0 & 1 & b\\
0 & 0 & 1\\
\end{pmatrix}\hspace{1cm} a,b,c\in\zp\]
 
with standard matrix multiplication. The order of $H_3(p)$ is $p^3$. To each element of $H_3(p)$ we associate the ``natural" ordered triple in $(\mathbb{Z}/p\mathbb{Z})^3$ (i.e $(a,b,c)$ in the above). So $(a,b,c)(a',b',c')=(a+a',b+b',c+c'+ab')$.\\

The center of $H_3(p)$ is $\{(0,0,c):c\in\zp\}$.\\

If $X=(a,b,c)$ with $(a,b)\neq (0,0)$ then $C_X=\{(ka,kb,c'):k,c'\in\zp\}$. To see this suppose $(a',b',c')$ commutes with $X$ then

\[(a,b,c)(a',b',c')-(a',b',c')(a,b,c)=(0,0,ab'-a'b)=0\]
so we need $ab'-a'b=0$. Equivalently that $a(a')^{-1}=b(b')^{-1}$. Thus a non-central element of $H(p)$ commutes with $p^2$ elements. This also shows that $H_3(p)$ is a CA group since for non-central $X$ we see $C_X=\{(ka,kb,c'):k,c'\in\zp\}$ is an abelian subgroup.\\
\indent $H_3(p)$ has $p^2+p-1$ conjugacy classes (see e.g ~\cite{Terras}). Take $C$ to be the centralizer of a non-central element, from the orbit stabilizer lemma we know that the size of the conjugacy classes of elements in $C\setminus Z$ is $\frac{p^3}{p^2}=p$. we see $\pi(C\setminus Z)=\frac{1}{p^2+p-1}\frac{1}{p}\left(p^2-p\right)=\frac{p-1}{p^2+p-1}$ which is less than $\frac{1}{2}$. So  the hypotheses of \Cref{LB} and \Cref{cutoff} are met\\
\indent Applying theorem \Cref{minUB} says $t_{mix}\leq p\log{\left(16\right)}+2$. But since all the centralizers are the same size \Cref{UCAmixing} does better with $t_{mix}\leq p\log{\left(4\right)}$. Below we'll see that $\lams\rightarrow 1$ as $p\to\infty$ so we arrive at

\begin{theorem}\label{h3mix}

For the commuting chain on  $H_3(p)$ we have 

\[\frac{p}{4} \leq t_{mix}\leq \ceil{p\log\left(4\right)} \]

furthermore the chain does not present a cutoff. 
\end{theorem}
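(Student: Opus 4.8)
The plan is to assemble the three ingredients already in place. The upper bound $t_{mix}\le \ceil{p\log(4)}$ is immediate from \Cref{UCAmixing}: since $H_3(p)$ is a CA group with $z=p$, every non-central centralizer of size $c=p^2$, and $|G|=p^3$, we have $\alpha=\min\{z/c,\,c/|G|\}=\min\{1/p,\,1/p\}=1/p$, so plugging $\varepsilon=1/4$ into \Cref{UCAmixing} gives the claim. The lower bound $p/4\le t_{mix}$ comes from \Cref{LB}: the computation in the text shows $\pi(C\setminus Z)=\tfrac{p-1}{p^2+p-1}<\tfrac12$ for every non-central centralizer $C$, so the hypothesis of \Cref{LB} is satisfied, and with $c_i=p^2$, $z=p$ the bound reads $c_i/(4z)=p^2/(4p)=p/4\le t_{mix}$.

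For the no-cutoff assertion I would invoke \Cref{cutoff}. Its hypotheses are: (i) $H_3(p)$ is CA — already verified in the text via the explicit abelian centralizers; (ii) $\pi(C_\star\setminus Z)\le \tfrac12$ — again verified, since all non-central centralizers have the same size $p^2$ and $\tfrac{p-1}{p^2+p-1}<\tfrac12$; and (iii) $\lams^{(p)}\to 1$ as $p\to\infty$. So the only thing left to prove is (iii): that the (absolute) second-largest eigenvalue of the transition matrix tends to $1$. For this I would use the characteristic polynomial formula of \Cref{CACP}. With $j$ non-central centralizers all of size $c=p^2$ and $z=p$, the factor $\prod_{i=1}^{j}\bigl(\lambda-\tfrac{c_i-z}{c_i}\bigr)$ contributes eigenvalues all equal to $\tfrac{c-z}{c}=\tfrac{p^2-p}{p^2}=1-\tfrac1p$. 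Hence $\lambda=1-\tfrac1p$ is an eigenvalue of the transition matrix (with multiplicity $j-1$, once we discard the Perron eigenvalue coming from the remaining factor — one must check $1-1/p$ is not the stationary eigenvalue $\lambda=1$, which is clear for $p\ge 2$), so $\lambda_2\ge 1-\tfrac1p$ and therefore $\lams^{(p)}\ge 1-\tfrac1p\to 1$. Since $\lams^{(p)}\le 1$ always, this forces $\lams^{(p)}\to 1$.

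The one subtlety to handle carefully is matching the eigenvalue $1-1/p$ against the $\lambda^{n-j-1}$ and the bracketed factor in \Cref{CACP}: I need to confirm that $1-1/p$ is genuinely realized as an eigenvalue of $P$ and is distinct from the Perron value $1$, rather than being cancelled or coinciding with a root of the bracketed rational factor in a degenerate way. For $p\ge 2$ we have $0<1-\tfrac1p<1$, so it cannot equal the stationary eigenvalue; and the product factor of \Cref{CACP} vanishes at $\lambda=1-\tfrac1p$ independently of the bracket, so $1-\tfrac1p$ is a root of the characteristic polynomial. That is the only place where care is needed; everything else is substitution into results already proved. Then \Cref{cutoff} applies and yields no cutoff, completing the proof.
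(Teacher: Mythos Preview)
Your proposal is correct and follows essentially the same route as the paper: the upper bound via \Cref{UCAmixing}, the lower bound via \Cref{LB} after checking $\pi(C\setminus Z)=\tfrac{p-1}{p^2+p-1}<\tfrac12$, and the no-cutoff claim via \Cref{cutoff} once \Cref{CACP} shows $1-\tfrac1p$ is an eigenvalue. One small slip to fix in your final paragraph: the bracketed factor in \Cref{CACP} has a \emph{simple pole} at $\lambda=1-\tfrac1p$ (since $c_k(\lambda-1)+z=0$ there), so ``the product factor vanishes independently of the bracket'' is not a valid inference; the correct observation is that the product contributes a zero of order $j=p+1\ge 2$ while the bracket contributes a pole of order $1$, leaving a net zero of order $p\ge 1$ --- exactly the multiplicity the paper records.
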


 \Cref{CACP} tells us the characteristic polynomial is 

 
 \[\lambda^{p^3-p-2}\left(\lambda-1\right)\left(\lambda-\left(1-\frac{1}{p}\right)\right)^p\left(p^2\lambda+p-1\right)\frac{1}{p^2}\]

We list the eigenvalues with multiplicity below ordered in decreasing value. 
\begin{center}
 \begin{tabularx}{0.4\textwidth} {  | >{\centering\arraybackslash}X 
  | >{\centering\arraybackslash}X | }
 \hline
 $\lambda$ & multiplicity \\
 \hline
 1  & 1   \\
  \hline
 $1-\frac{1}{p}$ & $p$ \\
\hline
$0$ & $p^3-p-2$\\
 \hline
$\frac{1-p}{p^2}$ & 1 \\
 \hline

\end{tabularx}
\end{center}

\begin{remark}
Using the bound eigenvalue bound from \Cref{StdEigBnd} gives
\[\left(p-1\right)\log{\left(\frac{1}{2\varepsilon}\right)}\leq \tmixe \ceil{\leq p\log{\left(\frac{p^3+p^2-p}{\varepsilon}\right)}}.\]

This upper bound would not have been useful for disproving cutoff. 

\end{remark}

\subsection{Affine group}\label{CPPoof}

\par  The Affine group, $A(p)$, is the set of $2\times 2$ matricies of the form 
\[\begin{bmatrix}
a & b  \\
0 & 1\\

\end{bmatrix} \hspace{1cm} a,b\in\mathbb{Z}_p,a\neq0\]
 with standard matrix multiplication for prime $p$. The order of $A(p)$ is $p(p-1)$. We use the same shorthand as in the Heisenberg group (so $(a,b)$ for the matrix above). Thus $ (x,y)(x',y')=(aa',ab'+b) $. The center is the identity $(1,0)$. 

If $X=(1,b)$ for $b\neq0$ then $ YX=XY$ iff $Y=(1,b')$ for $b'\in\zp$.\\

If $X=(a,b)$, $a\neq1$ then $XY=YX$ iff $Y=(k(a-1)+1,kb)$ for some $k\in(\zp)\setminus\{-(a-1)^{-1}\}$.\\

The center is of order $1$, there are $p$ centralizers of size $p-1$, and 1 centralizer of size $p$. $A(p)$ has $p$ conjugacy classes (\cite{Terras}). Take $C$ to be the centralizer of size $p$. Applying the orbit stabilizer lemma  shows that the non-central elements of $C$ belong to conjugacy classes of size $p-1$. Then   \[\pi(C\setminus Z)=\frac{p-1}{(p-1)p}\leq \frac{1}{2}.\] 

We'll see that $\lams\rightarrow 1$ as $p\to\infty$ so combining \Cref{LB,minUB} and \Cref{cutoff} we have the following. 
\begin{theorem}\label{affmix}

For the commuting chain on  $A(p)$ we have 

\[\frac{p}{4} \leq t_{mix}\leq \ceil{p\log\left(16\right)+2} \]
furthermore the chain does not present a cutoff.

\end{theorem}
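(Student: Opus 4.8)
The plan is to mirror the treatment of the Heisenberg group, assembling the bounds from the general results already proved. For the lower bound, I would apply \Cref{LB} with $C$ the centralizer of size $p$: we have just computed $\pi(C\setminus Z)=\frac{p-1}{p(p-1)}=\frac{1}{p}\leq\frac{1}{2}$, and with $z=1$ and $c_i=p$ the theorem gives $t_{mix}\geq \frac{c_i}{4z}=\frac{p}{4}$. For the upper bound, the centralizers are \emph{not} all the same size ($p$ of them have size $p-1$, one has size $p$), so \Cref{UCAmixing} does not directly apply; instead I would invoke \Cref{minUB} with $z=1$ and $c_\star=p$, which yields $t_{mix}(\varepsilon)\leq\ceil{\frac{2p}{1}\log(1/\varepsilon)+2}$, and at $\varepsilon=\frac14$ this is $\ceil{2p\log 4+2}=\ceil{p\log 16+2}$.

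For the no-cutoff claim I would appeal to \Cref{cutoff}: the hypotheses are that $\pi^{(n)}(C^{(n)}_\star\setminus Z^{(n)})\leq\frac12$ (verified above, with $C_\star$ the size-$p$ centralizer) and that $\lams^{(n)}\to 1$ as $p\to\infty$. The latter is exactly the point flagged by the sentence ``We'll see that $\lams\to 1$ as $p\to\infty$'', so the real content is to extract the spectrum from \Cref{CACP}. Plugging $z=1$ into the characteristic polynomial, the factors $\prod_i(\lambda-\frac{c_i-z}{c_i})$ contribute a root $\lambda = 1 - \frac1{p-1}$ with multiplicity $p$ (from the $p$ centralizers of size $p-1$) and a root $\lambda = 1-\frac1p$ with multiplicity $1$ (from the size-$p$ centralizer). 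One of the roots of the remaining rational factor must be $\lambda=1$ (the stationary eigenvalue); the others I expect to be small in absolute value, leaving $\lams = 1-\frac1{p-1}\to 1$. So cutoff fails by the product condition.

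The main obstacle is the spectral computation: I need to verify carefully that $\lambda=1-\frac1{p-1}$ (rather than some root of the rational factor, or $\lambda=1-\frac1p$) is indeed the second-largest eigenvalue in absolute value, which requires checking that every root of $\frac{z}{|G|}\bigl(1+\sum_k \frac{c_k-z}{c_k(\lambda-1)+z}\bigr)-\lambda$ other than $\lambda=1$ has modulus bounded away from $1-\frac1{p-1}$ (in fact $O(1/p)$, as the Heisenberg case suggests). Once $\lams = 1-\frac1{p-1}$ is confirmed, everything else is direct substitution into \Cref{LB}, \Cref{minUB}, and \Cref{cutoff}, exactly as was done for $H_3(p)$.
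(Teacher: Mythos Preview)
Your overall strategy matches the paper's exactly: lower bound from \Cref{LB} with the size-$p$ centralizer, upper bound from \Cref{minUB} with $z=1$ and $c_\star=p$, and no cutoff via \Cref{cutoff} once $\lams\to 1$ is established from \Cref{CACP}.

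Your spectral expectation, however, is off in a way worth flagging. When the characteristic polynomial from \Cref{CACP} is simplified, the factor $\lambda-\tfrac{p-1}{p}$ and one copy of $\lambda-\tfrac{p-2}{p-1}$ combine with the rational term to produce a quadratic $p(p-1)\lambda^2-(p-1)(p-2)\lambda-(p-2)$; its larger root, $\tfrac{p^2-3p+2+\sqrt{p^4-2p^3+p^2-4p+4}}{2p(p-1)}\sim 1-\tfrac{1}{p}$, is \emph{strictly larger} than $\tfrac{p-2}{p-1}$ and is in fact the true $\lams$. So the roots of the rational factor are not all $O(1/p)$ as you anticipated, and $\tfrac{p-2}{p-1}$ survives only with multiplicity $p-1$, not $p$. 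That said, this misidentification is harmless for the theorem: you only need $\lams\to 1$, and the mere fact that $\tfrac{p-2}{p-1}$ remains an eigenvalue (it does) already forces $\lams\geq \tfrac{p-2}{p-1}\to 1$, so \Cref{cutoff} applies. You can therefore bypass the ``main obstacle'' you flagged entirely by arguing $\lams\geq \tfrac{p-2}{p-1}$ rather than trying to pin down $\lams$ exactly.
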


 \Cref{CACP} tells us the characteristic polynomial simplifies to  

\[\lambda^{p^2-2p-2}\frac{(\lambda-1) \left(\lambda-\frac{p-2}{p-1}\right)^{p-1} \left(p^2 \lambda^2-p^2 \lambda-p \lambda^2+3 p \lambda-p-2 \lambda+2\right)}{p }.\]

Our eigenvalues ordered in decreasing value are

\begin{center}
 \begin{tabularx}{0.8\textwidth} { 
  | >{\centering\arraybackslash}X 
  | >{\centering\arraybackslash}X | }
 \hline
 $\lambda$ & multiplicity \\
 \hline
 1  & 1   \\
  \hline
 $\frac{p^2+\sqrt{p^4-2 p^3+p^2-4 p+4}-3 p+2}{2 (p-1) p} $ & $1$ \\
\hline
$\frac{p-2}{p-1}$ & $p-1$\\
 \hline
$0$ & $p^2-2p-2$ \\
 \hline
$\frac{p^2-\sqrt{p^4-2 p^3+p^2-4 p+4}-3 p+2}{2 (p-1) p}$ & $1$ \\
 \hline
\end{tabularx}
\end{center}

\begin{remark}
Similar to the $H_3(p)$ using only $\lambda_\star$ would give an upper bound for $t_{mix}$ insufficient for disproving cutoff.

\end{remark}

\subsection{$GL(2,q)$ -$q$ a power of a prime}

Take $p$ an odd prime and $q=p^k$ for some $k$. Let $GL(2,q)$ be the group of invertible $2\times2$ matrices with entries in the field of order $q$.  Then $GL(2,q)$ is an CA group of order $(q^2-1)(q^2-q)$ with center of size $q-1$ (see \cite{noncompap,speccommgraph}). There are 
\begin{itemize}
\item$\frac{q(q+1)}{2}$ centralizers of size $(q-1)^2$
\item $\frac{q(q-1)}{2}$ centralizers of size $q^2-1$, 
\item $q+1$ centralizers of size $q(q-1)$. 
\end{itemize}

There are $q^2-1$ conjugacy classes. Take $C$ to be a centralizer of size $q^2-1$, the non-central elements belong to conjugacy classes of size $\frac{(q^2-1)(q^2-q)}{q^2-1}=q(q-1)$

\[\pi\left(C\setminus Z\right)=\frac{1}{q^2-1}\frac{q^2-1-(q-1)}{q(q-1)}=\frac{1}{q^2-1}\leq\frac{1}{2}.\] 

Though we can not be sure of an explicit formula for $\lams$ below we see that $\lams\rightarrow 1$ as $q\to\infty$ so we arrive at 

\begin{theorem}\label{glmix}
For the commuting chain on  $GL(2,q)$ with $q$ a power of an odd prime we have 

\[\frac{q+1}{4}\leq\tmix\leq\ceil{\left(q+1\right)\log{\left(16\right)+2}} \]

furthermore the chain does not present cutoff. 

\end{theorem}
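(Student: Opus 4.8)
The plan is to apply the three general tools already established---the lower bound from \Cref{LB}, the minorization upper bound from \Cref{minUB}, and the no-cutoff criterion from \Cref{cutoff}---to the specific arithmetic data of $GL(2,q)$, exactly as was done for $H_3(p)$ and $A(p)$. The first step is to record the relevant parameters: $|GL(2,q)| = (q^2-1)(q^2-q)$, center size $z = q-1$, and the largest non-central centralizer size. Among the three centralizer sizes $(q-1)^2$, $q^2-1$, and $q(q-1)$, one checks that $q^2-1 = (q-1)(q+1)$ is the largest for $q \geq 2$ (comparing: $q^2-1$ vs.\ $q^2-q$ differ by $q-1>0$, and $q^2-1$ vs.\ $(q-1)^2 = q^2-2q+1$ differ by $2q-2>0$), so $c_\star = q^2-1$ and $c_\star/z = q+1$.

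For the lower bound, I would use the centralizer $C$ of size $q^2-1$ with the stationarity computation already carried out in the excerpt: since there are $q^2-1$ conjugacy classes and the non-central elements of $C$ lie in conjugacy classes of size $q(q-1)$ (by orbit-stabilizer), one gets $\pi(C\setminus Z) = \frac{1}{q^2-1} \leq \frac12$ for $q \geq 2$. Then \Cref{LB} with $c_i = q^2-1$ and $z = q-1$ gives $\frac{c_i}{4z} = \frac{q^2-1}{4(q-1)} = \frac{q+1}{4} \leq t_{mix}$. For the upper bound, \Cref{minUB} gives $t_{mix}(\varepsilon) \leq \ceil{\frac{2c_\star}{z}\log(1/\varepsilon) + 2} = \ceil{2(q+1)\log(1/\varepsilon)+2}$; taking $\varepsilon = 1/4$ yields $t_{mix} \leq \ceil{(q+1)\log(16)+2}$, matching the claimed bound. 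Note that unlike $H_3(p)$, here the centralizers are \emph{not} all the same size, so the sharper coupling bound \Cref{UCAmixing} does not apply, which is why the upper constant is $\log(16)$ rather than $\log(4)$.

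For the no-cutoff claim, the needed ingredients are $\pi(C_\star \setminus Z) \leq \frac12$ (just verified) and $\lams^{(n)} \to 1$ as $q \to \infty$ along any sequence of prime powers. The excerpt asserts the latter but does not prove it in the quoted passage; the honest gap in this proposal is establishing $\lams \to 1$. I expect this is the main obstacle, and the intended route is via \Cref{CACP}: plug the three centralizer sizes and multiplicities into the characteristic polynomial formula $\lambda^{n-j-1}\left(\frac{z}{|G|}\bigl(1+\sum_{k}\frac{c_k-z}{c_k(\lambda-1)+z}\bigr)-\lambda\right)\prod_i\left(\lambda - \frac{c_i-z}{c_i}\right)$, and observe that the factor $\prod_i(\lambda - \frac{c_i-z}{c_i})$ already contributes eigenvalues $\frac{c_i - z}{c_i} = 1 - \frac{z}{c_i}$; the largest of these is $1 - \frac{q-1}{q^2-1} = 1 - \frac{1}{q+1} = \frac{q}{q+1}$, which tends to $1$. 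Since all these factor-eigenvalues are strictly between $0$ and $1$ and $\frac{q}{q+1} \to 1$, we get $\lambda_2 \geq \frac{q}{q+1}$, hence $\lams \geq \lambda_2 \to 1$. Once $\lams \to 1$ is in hand, \Cref{cutoff} applies verbatim and the chains present no cutoff. The remaining bookkeeping---confirming $\frac{q}{q+1}$ really is the second-largest eigenvalue overall, and that nothing from the quadratic-in-$\lambda$ middle factor exceeds it---is routine but should be spelled out, paralleling the eigenvalue tables given for $H_3(p)$ and $A(p)$.
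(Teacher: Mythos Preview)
Your approach matches the paper's exactly: verify the hypothesis $\pi(C_\star\setminus Z)\leq\tfrac12$ using the centralizer of size $q^2-1$, then invoke \Cref{LB}, \Cref{minUB}, and \Cref{cutoff}, with $\lams\to 1$ read off from the characteristic polynomial of \Cref{CACP}.

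Two small comments. First, your final paragraph overcomplicates matters: for \Cref{cutoff} you only need $\lams^{(n)}\to 1$, not an identification of $\lambda_2$; since $\tfrac{q}{q+1}$ is an eigenvalue (there are $\tfrac{q(q-1)}{2}\geq 3$ centralizers of size $q^2-1$, and the pole of the middle factor at $\lambda=\tfrac{q}{q+1}$ can cancel at most one of these repeated linear factors), $\lams\geq \tfrac{q}{q+1}\to 1$ is immediate---the paper itself only \emph{conjectures} that $\lams=\tfrac{q}{q+1}$ and does not need it. Second, the middle factor is a cubic in $\lambda$, not a quadratic.
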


From \Cref{CACP} we have that the characteristic polynomials is (after some simplification)

\[\lambda^{(q^4-q^3-2 q^2-2)} (\lambda-1) \left(\lambda-\frac{q-2}{q-1}\right)^{\frac{q}{2}(q+1)-1} 
\left(\lambda-\left(\frac{q-1}{q}\right)\right)^q \left(\lambda-\left(\frac{q}{q+1}\right)\right)^{\frac{q}{2} (q-1) -1} \cdot\]
\[\frac{q^3 (\lambda-1)^2 \lambda+q^2 \left(3 \lambda^2-4 \lambda+1\right)-q \left(\lambda^3-2 \lambda^2-2 \lambda+2\right)+\lambda (3-2 \lambda)}{q}.\]

The roots of the third term (the cubic) do not have a simple expression in terms standard functions, however we conjecture (based on numerics) that $\lams=1-\frac{1}{q+1}$.

\subsection{$PSL(2,2^k)$}

For $k\geq 2$ $PSL(2,2^k)$ is the quotient of $SL(2,2^k)$ ($2\times 2$ matrices with determinate $1$ over a field with $2^k$ elements) with the subgroup of scalar matrices. $PSL(2,2^k)$ has order $2^k(2^{2k}-1)$, has center of order $1$, and is a CA group (\cite{noncompap,speccommgraph} ).
There are 
\begin{itemize}
\item$2^k+1$ centralizers of size $2^k$
\item $2^{k-1}(2^k+1)$ centralizers of size $2^k-1$, 
\item $2^{k-1}(2^k-1)$ centralizers of size $2^k+1$. 
\end{itemize}

From orbit stabilizer we see a conjugacy class corresponding to a centralizer of size $2^k+1$ has size $\frac{2^k(2^{2k}-1)}{2^k+1}=4^k-2^k$. There are $2^k+1$ conjugacy classes for this group. Taking $C$ to be a centralizer of size $2^{k+1}$ we see

\[\pi\left(C\setminus Z\right)=\frac{1}{2^k+1}\frac{2^k+1-1}{4^k-2^k}=\frac{1}{4^k-1}\leq \frac{1}{2}.\]

Like in the case of $GL(2,q)$ we don't have a formula for $\lams$, but we do know it satisfies $\lams\rightarrow 1$ as $k\to\infty$. So we arrive at 

\begin{theorem}\label{pslmix}

For the commuting chain on  $PSL(2,2^k)$ with $k\geq 2$ an integer we have  

\[\frac{2^k+1}{4}\leq\tmix\leq\ceil{\left(2^k+1\right)\log{\left(16\right)}+2}\]

furthermore the chain does not present cutoff. 

\end{theorem}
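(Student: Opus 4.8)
The plan is to mirror the structure of the earlier examples (Heisenberg, affine, $GL(2,q)$): the lower bound comes from the Cheeger/bottleneck estimate in \Cref{LB}, the upper bound from the minorization estimate in \Cref{minUB}, and the absence of cutoff from \Cref{cutoff}, once we have checked that all three hypotheses are satisfied. The group-theoretic data needed have already been recorded above: $|G|=2^k(2^{2k}-1)$, $z=1$, the centralizer sizes are $2^k$ (there are $2^k+1$ of them), $2^k-1$, and $2^k+1$, and there are $2^k+1$ conjugacy classes. In particular the largest non-central centralizer has size $c_\star = 2^{k+1}$ when $k \ge 1$... actually $c_\star=\max\{2^k,\,2^k-1,\,2^k+1\}=2^k+1$, so $c_\star/z = 2^k+1$.

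First I would verify the hypothesis $\pi(C\setminus Z)\le \tfrac12$ for a centralizer $C$ of maximal size $c_\star=2^k+1$: the orbit–stabilizer lemma gives that the non-central elements of $C$ lie in conjugacy classes of size $|G|/c_\star = 4^k-2^k$, so
\[
\pi(C\setminus Z)=\frac{1}{2^k+1}\cdot\frac{2^k}{4^k-2^k}=\frac{1}{4^k-1}\le\frac12,
\]
which holds for all $k\ge 1$. This is exactly the computation displayed in the excerpt (with $C$ the centralizer of size $2^k+1$, matching $c_\star$), so the hypotheses of both \Cref{LB} and \Cref{cutoff} involving $\pi$ are met, with $c_i = c_\star = 2^k+1$ and $z=1$.

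Next, the lower bound: \Cref{LB} with $c_i = 2^k+1$ and $z=1$ immediately gives $t_{mix}\ge \tfrac{c_i}{4z}=\tfrac{2^k+1}{4}$. For the upper bound, \Cref{minUB} gives $t_{mix}(\varepsilon)\le \ceil{\tfrac{2c_\star}{z}\log(1/\varepsilon)+2}$; specializing to $\varepsilon=\tfrac14$, so $\log(1/\varepsilon)=\log 4$ and $2\log 4 = \log 16$, and using $c_\star/z = 2^k+1$, this reads $t_{mix}\le \ceil{(2^k+1)\log 16 + 2}$. Note that $PSL(2,2^k)$ is non-abelian for $k\ge 2$ (it is simple), so \Cref{minUB} applies. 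That establishes the displayed inequality $\tfrac{2^k+1}{4}\le t_{mix}\le \ceil{(2^k+1)\log 16+2}$.

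For the no-cutoff claim I would invoke \Cref{cutoff}: we have already checked $\pi^{(n)}(C^{(n)}_\star\setminus Z^{(n)})\le \tfrac12$, so the only remaining ingredient is $\lams^{(n)}\to 1$ as $k\to\infty$. Here $n$ indexes the sequence $G^{(n)}=PSL(2,2^k)$. The cleanest route is \Cref{sgapbnd} in the form $1-\lambda_2 \le 2\Phi_\star \le 2z/c_\star = 2/(2^k+1)\to 0$, which forces $\lambda_2\to 1$ and hence $\lams\ge\lambda_2\to 1$; alternatively one reads off from the characteristic polynomial displayed just below the theorem that $1-\tfrac1{2^k}$ and $\tfrac{2^k}{2^k+1}$ are eigenvalues, either of which tends to $1$. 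With $\lams^{(n)}\to 1$ and the $\pi$-hypothesis in hand, \Cref{cutoff} yields that the product condition fails (its proof bounds $t^{(n)}_{mix}/(t^{(n)}_{rel}-1)$ by an absolute constant), and \Cref{ProdCnd} then gives no cutoff. The only genuinely non-routine point is confirming $\lams\to 1$ without an explicit formula for it; but as just noted this follows either from the Cheeger bound $1-\lambda_2\le 2/(2^k+1)$ or directly from the eigenvalue $1-2^{-k}$ exhibited in the characteristic polynomial, so there is no real obstacle — the argument is essentially a bookkeeping specialization of the general theorems to the tabulated data for $PSL(2,2^k)$.
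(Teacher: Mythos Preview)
Your proposal is correct and follows essentially the same approach as the paper: verify $\pi(C_\star\setminus Z)\le\tfrac12$ via orbit--stabilizer for a centralizer of size $2^k+1$, then invoke \Cref{LB} and \Cref{minUB} with $c_\star/z=2^k+1$ for the two bounds, and \Cref{cutoff} for the absence of cutoff. You actually supply slightly more detail than the paper on the point $\lams^{(k)}\to 1$, which the paper just asserts; your justification via the Cheeger bound (or via the explicit eigenvalue $1-2^{-k}$ read off from the characteristic polynomial) is exactly what is needed.
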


From \Cref{CACP} we have that the characteristic polynomials is (after some simplification)
\[2^{-k} (\lambda -1) \lambda ^{-2^{k+1}-4^k+8^k-2}\cdot \]
\[\left(\lambda -\frac{2^k-2}{2^k-1}\right)^{2^{k-1} \left(2^k+1\right)-1} 
\left(\lambda -\frac{ 2^k-1}{2^k}\right)^{2^k} 
\left(\lambda -\frac{2^k}{2^k+1}\right)^{2^{k-1}(2^k-1) -1}
\left(2^k-1\right) \cdot\]
\[ \left(-2 \lambda ^2-2^k \lambda ^3+2^{3 k} \lambda ^3+3\ 2^{2 k} \lambda ^2+2^{k+1} \lambda ^2-2^{3 k+1} \lambda ^2+2^{3 k} \lambda +2^{k+1} \lambda -2^{2 k+2} \lambda +3 \lambda +2^{2 k}-2^{k+1}\right) .\]

Like $GL(2,q)$ the third term in the characteristic polynomial does not lead to simple formulas for roots. But we Conjecture $\lams=1-\frac{1}{2^k+1}$.

\subsection{$D_{2n}$}
$D_{2n}$ is a CA group which has $2n$ elements which are $\{1,r ,r^2, \hdots, r^{n-1},s, sr,\hdots, sr^{n-1}\}$. The group is defined be the relations $r^n=s^2=(sr)^2=1$. The conjugacy structure is determined by whether $n$ is odd or even.

\subsubsection{n odd}

When $n$ is odd the center of $D_{2n}$ is just the identity, we have $1$ centralizer of size $n$ and $n$ centralizers of size $2$. There are a total of $\frac{n+3}{2}$ conjugacy classes. \\
\indent If $C$ is the centralizer of size $n$ then  $\pi(C\setminus Z)=\frac{2}{n+3}\frac{1}{2}\left(n-1\right)$. Since $\frac{n-1}{n+3}\geq \frac{1}{2} $ for $n\geq 5$ the hypothesis of \Cref{LB} fail. If we take $C$ to be any of the centralizers with two elements then $\pi(C\setminus Z)=\frac{2}{n+3}\frac{1}{n}$, which is smaller than $\frac{1}{2}$. Then \Cref{LB} gives us the trivial lower bound of $t_{mix}\geq\frac{1}{2}$. 
\indent Notice \Cref{minUB} tells us $t_{mix}(\varepsilon)\leq \ceil{2n\log{\frac{1}{\varepsilon}}+2}$, we believe this to be a poor bound. In the appendix  (\Cref{ccchain,D2nodd}) we show when the chain is started randomly on a fixed conjugacy class the mixing time is bounded by a constant independent of $n$.


For the characteristic polynomial we then have 

\[\frac{(\lambda -1) \left(\lambda -\frac{1}{2}\right)^{n-1} \lambda ^{n-2} \left(2 \lambda +4 \lambda ^2 n-2 \lambda  n-n+1\right)}{4 n}.\]

We list our eigenvalues with multiplicity.

\begin{center}
 \begin{tabularx}{0.8\textwidth} { 
  | >{\centering\arraybackslash}X 
  | >{\centering\arraybackslash}X | }
 \hline
 $\lambda$ & multiplicity \\
 \hline
 1  & 1   \\
  \hline
 $\frac{1}{4} \left(\frac{\sqrt{5 n^2-6 n+1}}{n}-\frac{1}{n}+1\right)$,& $1$ \\
\hline
$\frac{1}{2}$ & $n-1$ \\
 \hline
$0$ & $n-2$ \\
 \hline

 $\frac{1}{4} \left(-\frac{\sqrt{5 n^2-6 n+1}}{n}-\frac{1}{n}+1\right)$ & $1$\\
 \hline
\end{tabularx}
\end{center}

\subsubsection{n even}

When $n$ is even the center of $D_{2n}$ is of order $2$, we have $1$ centralizer of size $n$ and $\frac{n}{2}$ centralizers of size $4$. There are a total of $\frac{n+6}{2}$ conjugacy classes. 
\indent Now if $C$ is the centralizer of size $n$ then  $\pi(C\setminus Z)=\frac{2}{n+6}\frac{1}{2}\left(n-2\right)$. Since $\frac{n-2}{n+6}\geq \frac{1}{2} $ for $n\geq 10$ this is not an acceptable choice to apply \Cref{LB} with. If we take $C$ to be any of the centralizers with four elements then $\pi(C\setminus Z)=\frac{2}{n+6}\frac{2}{n}$, which is smaller than $\frac{1}{2}$. Then \Cref{LB} gives us the trivial lower bound of $t_{mix}\geq\frac{1}{4}$. \\
\indent Similarly to the odd case our general bounds do not do well here.  \Cref{minUB} tells us $t_{mix}(\varepsilon)\leq \ceil{n\log{\frac{1}{\varepsilon}}+2}$, however calculations in the appendix (\Cref{ccchain,D2neven}) do better.


For the characteristic polynomial we have

\[\frac{(\lambda-1) \left(\lambda-\frac{1}{4}\right)^{n-1} \lambda^{n-2} \left(n \left(3 \lambda^2-\lambda-1\right)+3 \lambda+2\right)}{n }. \]

We list our eigenvalues with multiplicity.

\begin{center}
 \begin{tabularx}{0.8\textwidth} { 
  | >{\centering\arraybackslash}X 
  | >{\centering\arraybackslash}X | }
 \hline
 $\lambda$ & multiplicity \\
 \hline
 1  & 1   \\
  \hline
$\frac{n-3+\sqrt{13n^2-30n+9}}{6n}$,& $1$ \\
\hline
$\frac{1}{4}$ & $n-1$ \\
 \hline
$0$ & $n-2$ \\
 \hline

 $\frac{n-3-\sqrt{13n^2-30n+9}}{6n}$ & $1$\\
 \hline
\end{tabularx}
\end{center}

\section{Remarks}\label{re}
The ratio $\frac{c_\star}{z}$ occurs in several places.\begin{itemize}
\item the minorization upper bound -\Cref{minUB}
\item the coupling upper bound  -\Cref{UCAmixing}
\item as part of a potential eigenvalue - \Cref{CACP}
\item as an upper bound to the Cheegar constant \Cref{LB} and \Cref{cutoff}.

\end{itemize}

The Dihedral group example shows that $\frac{c_\star}{z}$ does not always control the mixing time, here $c_\star$ is too big and fails the hypotheses of \Cref{LB}. Furthermore here the term corresponding to $\frac{c_\star-z}{c_\star}$ cancels out of the characteristic polynomial and so $1-\frac{z}{c_\star}$ is not an eigenvalue. If we meet the hypotheses of \Cref{cutoff} we know that mixing is controlled by $\frac{c_\star}{z}$, but $\frac{c_\star}{z}$ may not be the absolute spectral gap, as is the case with the affine group. \\
\indent We conjecture that for $D_{2n}$  that $\tmixn$ is bounded by a constant independent of $n$. Work in the appendix shows that this is the case when the chain has an initial distribution uniform on a given conjugacy class. $D_{2n}$ is the only example we have found where the spectral gap of the chain does not tend to $0$, this led us to the following conjecture.

\begin{conjecture}
For the commuting chain on a CA group of order $n$ we have that $t^{(n)}_{mix}\left(\varepsilon\right)$ is bounded by a constant independent of n if and only if the spectral gap of the chain do not tend to $0$ as $n\to\infty$. 
\end{conjecture}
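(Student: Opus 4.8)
The plan is to prove the two implications separately; the forward direction is immediate and the converse is where all the work lies. If $t^{(n)}_{mix}\le M$ for all $n$, then applying the left-hand inequality of \Cref{StdEigBnd} with $\varepsilon=1/4$ gives $(t^{(n)}_{rel}-1)\log 2\le t^{(n)}_{mix}\le M$, so $1-\lams^{(n)}=1/t^{(n)}_{rel}\ge(1+M/\log 2)^{-1}>0$ uniformly in $n$; hence neither $1-\lams^{(n)}$ nor $1-\lambda_2^{(n)}$ tends to $0$. (The same estimate works if $t^{(n)}_{mix}(\varepsilon)$ is bounded for any single fixed $\varepsilon<1/2$.) So the substance of the conjecture is the converse: a uniformly positive spectral gap forces $O(1)$ mixing.

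Assume $1-\lams^{(n)}\ge\gamma>0$ for all $n$; write $G=G^{(n)}$ with center $Z$ of size $z$ and blocks $C_i'=C_i\setminus Z$. The first step reuses the computation in the proof of \Cref{LB}: since every transition out of a block $C_i'$ lands in $Z$, one has $\Phi(C_i')=z/c_i$, and when $\pi(C_i')\le 1/2$ this gives $\Phi_\star\le z/c_i$; on the other hand \Cref{sgapbnd} gives $\Phi_\star\ge(1-\lambda_2^{(n)})/2\ge\gamma/2$. Hence every block with $\pi(C_i')\le 1/2$ satisfies $c_i\le 2z/\gamma$, and at most one block $C_g'$ can have $\pi(C_g')>1/2$. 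If no such giant block exists, then $c_\star\le 2z/\gamma$, so $c_\star/z\le 2/\gamma$ and \Cref{minUB} yields $t^{(n)}_{mix}(\varepsilon)\le\ceil{\frac{4}{\gamma}\log(1/\varepsilon)+2}$, a bound independent of $n$. This settles everything except the case of a single giant block.

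In that remaining case $\pi(C_g')>1/2$, and necessarily $c_g\to\infty$ (otherwise all centralizers would be bounded, forcing $k\ge|G|/c_\star\to\infty$ and $\pi(C_g')=\frac{(c_g-z)c_g}{k|G|}\to 0$); the model example is $D_{2n}$. We still know $\lams^{(n)}\le 1-\gamma$, so the obstruction is not a near-$1$ eigenvalue as such, but that the generic bounds are lossy here: \Cref{StdEigBnd} only gives $d(t)^2\lesssim\lams^{2t}/\pi_{\min}=O(|G|)\cdot\lams^{2t}$, and \Cref{minUB} only gives $d(t)\le(1-z/c_\star)^{t/2}$, each carrying a divergent factor. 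The plan is to exploit two structural facts about the giant block: $\pi$ is \emph{constant} on $C_g'$ (equal to $c_g/(k|G|)$, with $\pi(C_g')>1/2$), and one step of the chain from any $x\in C_g'$ is exactly the uniform distribution on $C_g$. Using these, together with the explicit eigenfunctions behind \Cref{CACP} (from the appendix proof), one would (a) replace the crude $\sum_{i\ge2}f_i(x)^2=\pi(x)^{-1}-1$ in the $L^2$ bound by the sum over only the eigenvalues near $1$, which should be localized on $x$'s block and contribute $O(1/\pi(C_g'))=O(1)$ when $x\in C_g'$; and (b) for starts $x$ outside $C_g'$, track the fraction $p_t$ of mass in $C_g'$, which obeys a scalar recursion whose drift toward the fixed point $\pi(C_g')$ is geometric at an $n$-independent rate (for $D_{2n}$ the rate is $\approx 1/2$, while the per-step leakage $z/c_g\to 0$ is harmless precisely because the fixed point $\pi(C_g')\to 1$ lies on the same side as the start).

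The main obstacle is making (a) and (b) rigorous and uniform in $n$: one must show that the eigenfunctions with eigenvalue near $1$ are both few and spatially localized, so their total squared value at any state is $O(1)$ rather than $O(|G|)$, and one must show that the mass exchanged between $C_g'$, the center, and the small blocks relaxes at an $n$-independent rate even though the per-step leakage rate $z/c_g$ does not. Organizing this through the lumped chain on the partition $\{C_g',Z,C_1',\dots\}$ — using that within each block the chain is uniform after a single step — seems the most promising route, but transferring a uniform relaxation estimate for the lumped chain back to the total-variation distance of the original chain is where the real difficulty lies.
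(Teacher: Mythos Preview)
This statement is labeled a \emph{conjecture} in the paper and is not proved there. The paper only offers a suggested line of attack --- show that the constant $C_n$ in the generic spectral bound $\Vert P_n^t(x,\cdot)-\pi_n\Vert_{TV}\le C_n(\lams^{(n)})^t$ can be taken independent of $n$, by controlling the eigenfunctions explicitly --- and carries this out only for the special family $D_{2n}$ (\Cref{D2nodd,D2neven}). No general argument is given.

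Your forward implication is correct and routine; the paper does not even spell it out. Your treatment of the converse goes genuinely further than the paper: the Cheeger--minorization reduction (if $1-\lams^{(n)}\ge\gamma$ then every block with $\pi(C_i')\le\tfrac12$ has $c_i\le 2z/\gamma$, so absent a giant block $c_\star/z\le 2/\gamma$ and \Cref{minUB} gives $O(1)$ mixing) is a clean observation that the paper does not make, and it isolates the obstruction to the conjecture as precisely the ``one giant block'' configuration exemplified by $D_{2n}$.

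However, your handling of that remaining case is, as you acknowledge, only a plan. Steps (a) and (b) are heuristic: you have not shown that the near-top eigenfunctions are few and localized with $O(1)$ total squared value at each state, nor that the lumped chain on $\{C_g',Z,C_1',\dots\}$ relaxes at an $n$-independent rate in a way that transfers back to total variation on $G$. These are exactly the difficulties the paper circumvents for $D_{2n}$ by writing down the eigenfunctions by hand. So your proposal does not prove the conjecture either; it advances beyond the paper's discussion by reducing to the giant-block case, but that case --- the substantive content of the conjecture --- remains open.
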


A potential line of proof is to use the well known fact that for a sequence of reversible ergodic chains with transition matrices $P_n$, equilibrium distributions $\pi_n$, and absolute second largest eigenvalues $\lambda_\star^{(n)}$ there exists a constants $C_n$ such that 

\[\Vert P_n^t(x,\cdot)-\pi_n \Vert_{TV}\leq C_n\left(\lambda^{(n)}_\star\right)^t \]

for all $x$. If one can prove that $C_n$ can be bounded by a constant independent of $n$ then the conjecture would follow. One can formulate expressions for $C_n$ in terms of eigenfunctions of transition matrices. This line of proof is carried out for $D_{2n}$ in the appendix in \Cref{D2nodd,D2neven}. \\

\begin{appendices}

\section{Proof of \Cref{CACP}}\label{CPProof}

\begin{proof}

Let $n=|G|$ we write the $n\times n$ transition matrix in block form as

\[P= \begin{bmatrix} 
W & V \\
X & Z 
\end{bmatrix}\]

 where $W$ is a $z\times z$ matrix with all entires equal to $\frac{1}{n}$.\\
 
$V$ is a $z \times \left(n-z\right)$ matrix also with all entires $\frac{1}{n}$.\\

$X$ is $\left(n-z\right) \times z$, the entries are constant along columns with the first $c_1-z$ rows all having entries being $\frac{1}{c_1}$ while the next $c_2-z$  have all entries equal to $\frac{1}{c_2}$ and so on.\\

$Z$ is a $\left(n-z\right) \times \left(n-z\right)$ block diagonal matrix - the matrices on the diagonal are blocks of size  $\left(c_i-z\right)\times \left(c_i-z\right)$ with all entires $\frac{1}{c_i}$ for $1\leq i \leq j$. The rest of the matrix is all zeros.\\

To calculate $\det(P-\lambda I)$ we note (for appropriately sized $I$)\\

\[\begin{bmatrix}
I & -VZ^{-1}\\
0& I

\end{bmatrix}
\begin{bmatrix}
W & V\\
X& Z

\end{bmatrix}=
\begin{bmatrix}
W-VZ^{-1}X & 0\\
X& Z

\end{bmatrix}\]

and so 
\[\det(P-\lambda I)=\det(Z-\lambda I )\det(\left(W- \lambda I\right)-V(Z-\lambda I)^{-1}X).\]

We now calculate the two terms in the product.\\

\begin{enumerate}

\item $\det(Z-\lambda I )$

Since $Z$ is a block diagonal matrix we just need to know the eigenvalues of the blocks. Each block is $\left(c_i-z\right) \times \left(c_i-z\right)$ with all entires $\frac{1}{c_i}$ for $1\leq i \leq j$. 

Such a matrix has eigenvalue $0$ with multiplicity $c_i-z-1$ and eigenvalue $\frac{c_i-z}{c_i}$ and so 

\[\det(Z-\lambda I )=(-1)^{n-z}\prod_{i=1}^j\left(\lambda-\frac{c_i-z}{c_i}\right)\lambda^{c_i-z-1}\]
\[=(-1)^{n-z}\lambda^{n-z-j}\prod_{i=1}^j\left(\lambda-\frac{c_i-z}{c_i}\right)\]

\item $\det(\left(W- \lambda I\right)-V(Z-\lambda I)^{-1}X)$\\

We'll first calculate each term in the determinant separately. 

\begin{enumerate}

\item $(Z-\lambda I)^{-1}$
Since $Z$ is invertible we can use the Neumann series identity
\[(Z-\lambda I)^{-1}=-\frac{1}{\lambda}(I-\frac{1}{\lambda}Z)^{-1}=-\frac{1}{\lambda}\sum_{k=0}^\infty\left( \frac{1}{\lambda}Z\right)^k\]

$Z^k$ is a block diagonal matrix with $\left(c_i-z \right)\times \left(c_i-z\right)$ blocks with all entires $\frac{(c_i-z)^{k-1}}{c_i^k}$  for $1\leq i \leq j$.

 So $-\frac{1}{\lambda}\sum_{k=0}^\infty\left( \frac{1}{\lambda}Z\right)^k$ is block diagonal with $\left(c_i-z\right) \times \left(c_i-z\right)$ blocks with all entries 
 
 \[\frac{-1}{\lambda}\sum_{k=0}^\infty\frac{(c_i-z)^{k-1}}{\left(\lambda c_i\right)^k}=\frac{-c_i}{\left(c_i-z\right)\left(c_i\left(\lambda-1\right)+z\right)}\]

for $1\leq i \leq j$.

\item $V(Z-\lambda I)^{-1}$\\

$V(Z-\lambda I)^{-1}$ is a $z\times \left(n-z\right)$ matrix constant along rows, the first $c_1-z$ columns have all entires $\frac{-c_1}{n\left(c_1\left(\lambda-1\right)+z\right)}$

\item $V(Z-\lambda I)^{-1}X$\\

 $V(Z-\lambda I)^{-1}X$ is a $z\times z$ matrix with all entries $\sum_{i=1}^j\frac{-\left(c_i-z\right)}{n\left(c_i
\left(\lambda-1\right)+m\right)}=:s$

\end{enumerate}

$\left(W- \lambda I\right)-V(Z-\lambda I)^{-1}X$ has all it's diagonal entries as $\frac{1}{n}-\lambda-s$ while the remaining entries are $\frac{1}{n}-s$. We use some row operations to calculate the determinate. \\

To the first row we add each row beneath now the first row is $m\left(\frac{1}{n}-s\right)-\lambda$. Now to each of the rows below the first we subtract the first - now all rows below the first are all $0$ except for $-\lambda$ on the diagonal.\\

We conclude \[\det(\left(W- \lambda I\right)-V(Z-\lambda I)^{-1}X)=\left(z\left(\frac{1}{n}-s\right)-\lambda\right)\left(-\lambda\right)^{z-1}\]

and from here we're done after noting powers of $-1$ can be disregarded.

\end{enumerate}

\end{proof}

\section{Chain On Conjugacy Classes}
\quad Given a finite group $G$ let $P$ be the transition probabilities for the commuting chain on $G$, we call the Markov chain with transition probabilities given by 

\[\tilde{P}(O_1,O_2)=\sum_{y\in O_2}\sum_{x\in O_1}P(x,y)\frac{1}{|O_1|} \text{ where }O_1,O_2\in\text{Cl}_G(G) \]
the  commuting chain on the conjugacy classes of G.

 To relate the mixing times for the commuting chain on $G$ and the chain on the conjugacy classes we need the following lemma.
 
 \begin{lemma}

\label{ALGFACT}
Let $P$ be the transition matrix for the commuting chain on a group $G$. Then for any $x,y,g\in G$ we have

\[P(x,y)=P(gxg^{-1},gyg^{-1}).\]
\end{lemma}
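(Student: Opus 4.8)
The plan is to unwind both sides of the claimed identity directly from the definition $P(x,y)=\frac{1}{|C_x|}\mathds{1}_{\{xy=yx\}}$ and to exploit the fact that conjugation is an automorphism of $G$. First I would observe that conjugation by $g$ is a bijection of $G$, so it carries the centralizer $C_x$ onto the centralizer $C_{gxg^{-1}}$; concretely, $h\in C_x$ if and only if $hx=xh$, which upon conjugating by $g$ is equivalent to $(ghg^{-1})(gxg^{-1})=(gxg^{-1})(ghg^{-1})$, i.e. $ghg^{-1}\in C_{gxg^{-1}}$. Hence $|C_{gxg^{-1}}|=|C_x|$.

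Next I would handle the indicator: $xy=yx$ if and only if $gxyg^{-1}=gyxg^{-1}$, and since $gxyg^{-1}=(gxg^{-1})(gyg^{-1})$ and likewise for $gyxg^{-1}$, this is equivalent to $(gxg^{-1})(gyg^{-1})=(gyg^{-1})(gxg^{-1})$. So $\mathds{1}_{\{xy=yx\}}=\mathds{1}_{\{(gxg^{-1})(gyg^{-1})=(gyg^{-1})(gxg^{-1})\}}$.

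Combining the two observations,
\[
P(gxg^{-1},gyg^{-1})=\frac{1}{|C_{gxg^{-1}}|}\mathds{1}_{\{(gxg^{-1})(gyg^{-1})=(gyg^{-1})(gxg^{-1})\}}=\frac{1}{|C_x|}\mathds{1}_{\{xy=yx\}}=P(x,y),
\]
which is the desired equality.

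There is essentially no obstacle here: the only thing to be careful about is that the map $h\mapsto ghg^{-1}$ really is a bijection from $C_x$ to $C_{gxg^{-1}}$ (so that the cardinalities match), which is immediate since it has inverse $h\mapsto g^{-1}hg$ and the computation above shows it sends $C_x$ into $C_{gxg^{-1}}$ and vice versa. This lemma is really just the statement that the commuting chain is equivariant under the conjugation action of $G$ on itself, and it is what allows the transition probabilities to descend to a well-defined chain on conjugacy classes.
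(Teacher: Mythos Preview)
Your proof is correct and follows essentially the same approach as the paper: both arguments establish that $gC_xg^{-1}=C_{gxg^{-1}}$ via the equivalence $h(gxg^{-1})=(gxg^{-1})h \iff (g^{-1}hg)x=x(g^{-1}hg)$, from which the equality of transition probabilities is immediate. You are slightly more explicit than the paper in separately verifying the invariance of the commuting indicator, but the substance is the same.
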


\begin{proof}
\item The claim follows from the fact that $gC_xg^{-1}=C_{gxg^{-1}}$. To see this note for a group element $h$ \[h(gxg^{-1})=(gxg^{-1})h\iff   (g^{-1}hg) x =x(g^{-1}hg) .        \]

So we have $h\in C_{gxg^{-1}}\iff (g^{-1}hg)\in C_{x}$.

\end{proof}

\begin{theorem}\label{ccchain}

Let $P,\pi$ be the transition matrix and stationary distribution for the commuter's chain on a group $G$ and $\tilde{P},\tilde{\pi}$ and the transition matrix and stationary distribution for the commuter's chain on the conjugacy classes of $G$. For any $t\geq1$ we have

\[\Vert \mu_{K}P^t-\pi \Vert_{TV}=\Vert \tilde{P}^t(K,\cdot)-\tilde{\pi} \Vert_{TV}\] where $\mu_{K}$ is the uniform distribution on conjugacy class $K$.

\end{theorem}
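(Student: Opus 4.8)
The plan is to show that the commuting chain on $G$, when started from the uniform distribution $\mu_K$ on a conjugacy class $K$, is a ``lift'' of the commuting chain on conjugacy classes in the precise sense that its pushforward under the class map agrees with $\tilde P$, and moreover that at each step the conditional law of the chain given its current conjugacy class is still uniform on that class. The key tool is \Cref{ALGFACT}: the transition matrix $P$ is conjugation-invariant, so conjugation by any $g\in G$ is an automorphism of the chain. I would first record the consequence that if $\mu$ is a distribution on $G$ that is constant on conjugacy classes (equivalently, invariant under the conjugation action of $G$), then so is $\mu P$. Indeed $(\mu P)(gyg^{-1}) = \sum_x \mu(x) P(x, gyg^{-1}) = \sum_x \mu(x) P(g^{-1}xg, y) = \sum_{x'} \mu(gx'g^{-1}) P(x', y) = \sum_{x'}\mu(x')P(x',y) = (\mu P)(y)$, using \Cref{ALGFACT} and the substitution $x' = g^{-1}xg$ together with $\mu$'s class-invariance. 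By induction $\mu_K P^t$ is class-invariant for every $t$, hence is determined by its values on conjugacy classes; and since the identity commutes with everything the chain is irreducible so $\pi$ is also class-invariant (indeed $\pi(x) = 1/(k|x^G|)$ is manifestly constant on $x^G$).

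Next I would identify the class-aggregated version of $\mu_K P^t$ with $\tilde P^t(K,\cdot)$. Define $\Phi: \mathcal{M}(G)\to\mathcal{M}(\mathrm{Cl}_G(G))$ by $(\Phi\mu)(O) = \sum_{x\in O}\mu(x) = \mu(O)$, the pushforward under the class map. The claim is the intertwining relation $\Phi(\mu P) = (\Phi\mu)\tilde P$ for every \emph{class-invariant} $\mu$ — note this is exactly the content needed, and it is precisely here that we use that $\mu$ is uniform on each of its classes, i.e. $\mu(x) = \mu(O)/|O|$ for $x\in O$. Expanding, $(\Phi(\mu P))(O_2) = \sum_{y\in O_2}\sum_{O_1}\sum_{x\in O_1}\mu(x)P(x,y) = \sum_{O_1}\sum_{x\in O_1}\mu(x)\sum_{y\in O_2}P(x,y)$. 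The inner sum $\sum_{y\in O_2}P(x,y)$ depends only on the class $O_1$ of $x$ (again by \Cref{ALGFACT}: conjugating $x$ permutes $O_2$), call it $p(O_1,O_2)$; so this equals $\sum_{O_1}\mu(O_1)\,p(O_1,O_2)$. On the other side, $\tilde P(O_1,O_2) = \frac{1}{|O_1|}\sum_{x\in O_1}\sum_{y\in O_2}P(x,y) = \frac{1}{|O_1|}\cdot|O_1|\cdot p(O_1,O_2) = p(O_1,O_2)$, so $((\Phi\mu)\tilde P)(O_2) = \sum_{O_1}\mu(O_1)p(O_1,O_2)$, matching. Applying this with $\mu = \mu_K P^{t-1}$ (which is class-invariant and, being class-invariant with total mass $1$, automatically ``uniform on its classes'' is \emph{not} what we need — rather we need $\Phi$ to intertwine, which the computation above shows holds for \emph{any} class-invariant $\mu$ since class-invariance alone gives $\mu(x)=\mu(O)/|O|$ on $x\in O$) and inducting from $\Phi(\mu_K) = \delta_K$ yields $\Phi(\mu_K P^t) = \delta_K \tilde P^t = \tilde P^t(K,\cdot)$, and likewise $\Phi(\pi) = \tilde\pi$ since $\tilde\pi$ is the stationary distribution of $\tilde P$ and $\Phi(\pi)$ is $\tilde P$-stationary by the intertwining (with uniqueness from ergodicity of $\tilde P$).

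Finally I would pass to total variation. Since $\mu_K P^t$ and $\pi$ are both class-invariant, for any set $A\subseteq G$ one has $|\mu_K P^t(A) - \pi(A)| \le \sum_O|\mu_K P^t(O)-\pi(O)|$ with equality achieved by taking $A$ to be a union of whole conjugacy classes; hence $\Vert \mu_K P^t - \pi\Vert_{TV} = \frac12\sum_{x\in G}|\mu_K P^t(x)-\pi(x)| = \frac12\sum_{O}|\mu_K P^t(O) - \pi(O)| = \Vert \Phi(\mu_K P^t) - \Phi(\pi)\Vert_{TV} = \Vert \tilde P^t(K,\cdot) - \tilde\pi\Vert_{TV}$, where I use that class-invariance makes the $\ell^1$ distance collapse exactly onto the aggregated measures. (One should double check the $\frac12\sum|\cdot|$ versus $\sup_A$ forms agree, which is the standard identity for total variation on a finite space.) The main obstacle is essentially bookkeeping: one must be careful that ``$\mu$ class-invariant'' is genuinely the right hypothesis propagated at each step (it is, by the first paragraph) and that this single property is simultaneously what makes the pushforward intertwine with $\tilde P$ and what makes the TV distance descend without loss to the quotient chain; there is no hard estimate, only the need to apply \Cref{ALGFACT} cleanly in three slightly different guises.
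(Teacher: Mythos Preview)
Your proof is correct and follows essentially the same route as the paper's: use \Cref{ALGFACT} to show that $\mu_K P^t$ is constant on conjugacy classes, identify its class-aggregates with $\tilde P^t(K,\cdot)$, and then collapse the $\ell^1$ formula for total variation onto classes. Your version is somewhat more explicit in packaging the key step as an intertwining relation $\Phi(\mu P)=(\Phi\mu)\tilde P$ for class-invariant $\mu$ and carrying out the induction on $t$, whereas the paper asserts $|O|\,\mu_K P^t(y)=\tilde P^t(K,O)$ more directly after establishing conjugation-invariance; the content is the same.
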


\begin{proof}
Let $O_1,\hdots,O_k$ be an enumeration of the conjugacy classes of $G$.

First note $ \mu_{K}P^t(y)=\frac{1}{|K|}\sum_{z\in K} P(z,y)$ is invariant under conjugation since for any $g$ 
\[ \mu_{K}P^t(gyg^{-1})=\frac{1}{|K|}\sum_{z\in K} P^t(z,gyg^{-1})=\frac{1}{|K|}\sum_{z\in K} P^t(g^{-1}zg,y)=\mu_{K}P^t(y)\]

using \Cref{ALGFACT}. Now since $\mu_{K}P^t(y)$ is constant for a given $y$ in conjugacy class $O$ we have $|O|\mu_{K}P^t(y)=\sum_{y\in O}\mu_{K}P^t(y)=\tilde{P}(K,O)$

So 

\[\Vert \mu_{K}P^t-\pi \Vert_{TV}=\frac{1}{2}\sum_{y\in G}\left|\mu_{K}P^t(y)-\pi(y)\right|=\frac{1}{2}\sum_{i=1}^k\sum_{y\in O_i}\left|\mu_{K}P^t(y)-\pi(y)\right| \]

\[=\frac{1}{2}\sum_{i=1}^k|O_i|\left|\mu_{K}P^t(y)-\pi(y)\right|=\frac{1}{2}\sum_{i=1}^k\left||O_i|\mu_{K}P^t(y)-\tilde{\pi}(O_i)\right|\]
\[=\frac{1}{2}\sum_{i=1}^k\left|\tilde{P}(K,O_i)-\tilde{\pi}(O_i) \right|=\Vert \tilde{P}^t(K,\cdot)-\tilde{\pi} \Vert_{TV} \]

which concludes the proof.
\end{proof}

\section{$D_{2n}$ for odd $n$}

For basic information about $D_{2n}$ see e.g \cite{DF}, in particular we follow the notation established in \cite{DF}. We have $\frac{n+3}{2}$ conjugacy classes, they are $\{1\}$,$ \{r^i,r^{-i}\}$, and $\{s,sr,....sr^{n-1}\}$, where $1\leq i\leq\frac{n-1}{2}$. We will enumerate in that order, that is we will associate with each conjugacy class a natural number as follows:\\

\begin{center}
  \begin{tabular}{ |c | c | c | c | c | c|} 
    \hline
    $1$ & $2$ & $3$ & ... &$\frac{n+1}{2}$ & $\frac{n+3}{2}$\\ \hline
    $\{1\}$ &  $\{r,r^{-1}\}$&  $\{r^2,r^{-2}\}$ &...&$\{r^\frac{n-1}{2}$,$r^{-\frac{n-1}{2}}\}$& $\{s,s^r,...,sr^{n-1}\}$\\ \hline
  \end{tabular}
\end{center}

\quad Let $m=\frac{n+3}{2}$. Our $m\times m$ transition matrix for the commuter's chain on conjugacy classes can then be written as:
\begingroup
\renewcommand*{\arraystretch}{1.5}
\[ P =\begin{pmatrix}
    \frac{1}{2n} & \frac{1}{n} & \dots & \frac{1}{n}  & \frac{1}{2} \\
    
    \frac{1}{n} & \frac{2}{n} & \dots & \frac{2}{n}  & 0\\
    
    \vdots & \vdots & \ddots & \vdots & \vdots \\
     
     \frac{1}{n} & \frac{2}{n} & \dots & \frac{2}{n}  & 0\\
  
     \frac{1}{2}&0& 0 & \dots  & \frac{1}{2}

\end{pmatrix}
\]
\endgroup

with stationary distribution $\pi(i)=\frac{2}{n+3}$ for $1\leq i \leq m$.\\

 We will list the eigenvalues and eigenvectors corresponding to the commuting chain on conjugacy classes of $D_{2n}$ for odd $n$ after this lemma. 

\begin{lemma}\label{ZEROMULT}
Suppose $A$ is a $n\times n$ symmetric matrix with $k$ identical rows. Then $0$ is an eigenvalue of $A$ with multiplicity at least $k-1$.
\end{lemma}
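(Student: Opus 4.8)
The plan is to use the symmetry of $A$ to turn ``identical rows'' into ``identical columns,'' and then to exhibit an explicit $(k-1)$-dimensional subspace of $\ker A$. First I would let $i_1,\dots,i_k$ be the indices of the $k$ identical rows of $A$. Since $A=A^{T}$, the columns of $A$ with these same indices are identical as well, so $Ae_{i_1}=Ae_{i_2}=\cdots=Ae_{i_k}$, where $e_1,\dots,e_n$ denotes the standard basis. Consequently $A(e_{i_1}-e_{i_a})=0$ for every $a=2,\dots,k$.

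Next I would note that the $k-1$ vectors $e_{i_1}-e_{i_2},\,e_{i_1}-e_{i_3},\,\dots,\,e_{i_1}-e_{i_k}$ are linearly independent: any nontrivial linear combination has a nonzero entry in some coordinate $i_a$ with $a\ge 2$. Hence they span a $(k-1)$-dimensional subspace of $\ker A$, so $\dim\ker A\ge k-1$, i.e.\ the geometric multiplicity of the eigenvalue $0$ is at least $k-1$. Since for any square matrix the geometric multiplicity of an eigenvalue is at most its algebraic multiplicity, it follows that $0$ is an eigenvalue of $A$ with (algebraic) multiplicity at least $k-1$.

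There is essentially no obstacle here; the only point needing a word of care is the distinction between the dimension of the eigenspace and the multiplicity of $0$ as a root of the characteristic polynomial, which is settled by the general inequality just quoted (or, if one wishes, by the spectral theorem, since $A$ is symmetric and hence diagonalizable). I would remark that symmetry is in fact used only for the passage from identical rows to identical columns, and even that can be avoided: $k$ identical rows force $\operatorname{rank}A\le n-k+1$, hence $\dim\ker A\ge k-1$ directly. I would nonetheless present the argument via symmetry, since the matrices to which the lemma is applied are symmetric.
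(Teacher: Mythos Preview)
Your argument is correct. The paper takes a slightly different route: it invokes the spectral theorem to write $A=QDQ^{T}$ with $D$ diagonal, observes that $k$ identical rows force $\operatorname{rank}A\le n-k+1$, and then uses that rank is preserved under multiplication by invertible matrices to conclude that $D$ has at least $k-1$ zeros on its diagonal, so $0$ has algebraic multiplicity at least $k-1$. Your approach is more constructive: by passing from identical rows to identical columns via symmetry you produce $k-1$ explicit, linearly independent eigenvectors $e_{i_1}-e_{i_a}$, obtaining the geometric multiplicity directly and then quoting the standard inequality (or the spectral theorem) to upgrade to algebraic multiplicity. The paper's rank argument is what you mention in your final remark, so the two proofs overlap there; the main difference is that you exhibit the kernel explicitly, which is a nice bonus since these vectors are exactly the ``obvious'' eigenvectors one would write down for the block-constant transition matrices in the application.
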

\begin{proof}
\quad Since $A$ is symmetric there exists an orthogonal similarity transformation into a diagonal matrix, say $ A= QDQ^T$ where $D$ is diagonal with the diagonal entries being the eigenvalues of $A$.\\

\quad Since $A$ has $k$ identical rows the rank of $A$ can be no greater than $n-k+1$. Since rank is invariant under multiplication of a matrix of full rank (i.e $Q$) we have that that the rank of $D$ is no greater than $n-k+1$, this is only possible if $0$ appears $n-m+1$ times on the diagonal of $D$. This implies that $0$ is an eigenvalue of $A$ with multiplicity of at least $k-1$. 

\end{proof}

Let \[c_n=\sqrt{(5n-1)(n-1)},\]

\[A_n=\sqrt{ \frac{n^2(n+3)}{5n^2-nc_n+4n+c_n-1 }} \text{, and}\] \\ 
\[B_n=\sqrt{\frac{n^2(n+3)}{5n^2+nc_n+4n-c_n-1  }}. \]
The nonzero eigenvalues for the transition matrix are: 
\[\lambda_1=1,\]
\[\lambda_2=\frac{n-1+c_n}{4n} \text{, and}\] 
 \[\lambda_{m}=\frac{n-1-c_n}{4n}.\]\\

Our corresponding eigenfunctions, normalized in $\ell^2(\pi)$, are\\

 \[ f_1=(1,\dots,1), \]\\
\[ f_2 =A_n\left( \frac{1}{n}\left(-\frac{n+1}{2}+\frac{c_n}{2}\right),\frac{1}{n}\left(-1-\frac{c_n}{n-1}\right),\dots, \frac{1}{n}\left(-1-\frac{c_n}{n-1}\right),1\right),
  \]\\
  and
  
  \[ f_{m} =B_n \left( \frac{1}{n}\left(-\frac{n+1}{2}-\frac{c_n}{2}\right),\frac{1}{n}\left(-1+\frac{c_n}{n-1}\right),\dots, \frac{1}{n}\left(-1+\frac{c_n}{n-1}\right),1\right). \]\\

That these are indeed eigenvalues and functions can be verified with a computation.

 \Cref{ZEROMULT} assures us that these are the only non-zero eigenvalues. To show that $\tmix$ is bounded by a constant (independent of $n$) we need the following lemma which is easy to check.

\begin{lemma}\label{d2n0ddbounds}

For the eigenvalues and functions above we have for $n\geq3$

\begin{enumerate}

\item $n\leq c_n\leq 3n$

\item $A_n\leq\sqrt{n}$ 

\item $B_n\leq\sqrt{n}$

\item$ |\frac{f_2(i)}{A_n}|\leq \frac{\sqrt{5}+1}{2}$ for any $i$

\item $ |\frac{f_m(i)}{B_n}|\leq \frac{\sqrt{5}+1}{2}$ for any $i$

\item $ |\frac{f_2(i)}{A_n}|\leq \frac{4}{n}$ for $2\leq i\leq m-1$

\item $ |\frac{f_m(i)}{B_n}|\leq \frac{4}{n}$ for $2\leq i\leq m-1$.

\item $\lambda_\star<\frac{1+\sqrt{5}}{4}$

\end{enumerate}

\end{lemma}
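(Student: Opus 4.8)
The plan is to verify each of the eight inequalities in \Cref{d2n0ddbounds} by direct computation, since each reduces to an elementary estimate on the closed-form expressions for $c_n$, $A_n$, $B_n$, and the eigenfunction entries. I would organize the proof around a few reusable estimates rather than treating all eight items independently. First I would establish item (1): since $c_n^2 = (5n-1)(n-1) = 5n^2 - 6n + 1$, the bound $n \le c_n \le 3n$ is equivalent to $n^2 \le 5n^2 - 6n + 1 \le 9n^2$, i.e. $4n^2 - 6n + 1 \ge 0$ and $4n^2 + 6n - 1 \ge 0$, both of which are clear for $n \ge 3$ (indeed the first holds for $n \ge 2$). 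This bound is the workhorse: it immediately bounds the denominators appearing inside $A_n$ and $B_n$ from below, and it pins down the magnitudes of the eigenfunction components.

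Next I would handle items (2) and (3). For $A_n \le \sqrt{n}$ it suffices to show $\frac{n^2(n+3)}{5n^2 - nc_n + 4n + c_n - 1} \le n$, i.e. $n(n+3) \le 5n^2 - nc_n + 4n + c_n - 1 = 5n^2 + 4n - 1 - c_n(n-1)$; rearranging, we need $c_n(n-1) \le 4n^2 + n - 1$. Using $c_n \le 3n$ from item (1), the left side is at most $3n(n-1) = 3n^2 - 3n \le 4n^2 + n - 1$, which holds for all $n \ge 1$. The estimate for $B_n$ is identical in structure but easier, since its denominator $5n^2 + 4n - 1 + c_n(n-1)$ is larger (the $c_n$ term now helps), so $B_n \le A_n \le \sqrt n$ actually follows a fortiori. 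Items (4)--(7) then concern the normalized components $f_2(i)/A_n$ and $f_m(i)/B_n$: using item (1) to replace $c_n$ by its bounds $n \le c_n \le 3n$, one checks that the first component $\frac{1}{n}(-\tfrac{n+1}{2} \pm \tfrac{c_n}{2})$ has absolute value at most $\frac{1}{2n}(n + 1 + c_n) \le \frac{1}{2n}(4n+1) \le \frac{1+\sqrt5}{2}$ for $n \ge 3$ (and the sharper constant is attained in the relevant limit), giving (4) and (5); while the repeated middle components $\frac{1}{n}(-1 \pm \tfrac{c_n}{n-1})$ have absolute value at most $\frac{1}{n}(1 + \tfrac{c_n}{n-1}) \le \frac{1}{n}(1 + \tfrac{3n}{n-1}) \le \frac{4}{n}$ for $n \ge 3$, giving (6) and (7). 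The last component is $1$, which is bounded by $\frac{1+\sqrt5}{2}$, consistent with (4)--(5).

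Finally, item (8), $\lambda_\star < \frac{1+\sqrt5}{4}$, follows from the eigenvalue list: $\lambda_\star = \max\{\lambda_2, |\lambda_m|\}$ where $\lambda_2 = \frac{n-1+c_n}{4n}$ and $|\lambda_m| = \frac{c_n - (n-1)}{4n}$ (note $c_n > n - 1$, so $\lambda_m < 0$). Since $\lambda_2 - |\lambda_m| = \frac{2(n-1)}{4n} > 0$, we have $\lambda_\star = \lambda_2 = \frac{n-1+c_n}{4n}$, and using $c_n \le \sqrt{5}\,n$ (which is item (1) sharpened — indeed $c_n^2 = 5n^2 - 6n + 1 < 5n^2$), we get $\lambda_2 < \frac{n - 1 + \sqrt5 n}{4n} = \frac{1+\sqrt5}{4} - \frac{1}{4n} < \frac{1+\sqrt5}{4}$. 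I do not anticipate a genuine obstacle here — every step is a polynomial inequality in $n$ — but the one point requiring mild care is keeping track of signs inside the square roots $A_n$ and $B_n$ to confirm the radicands are positive for $n \ge 3$ (which again follows from $c_n(n-1) \le 4n^2 + n - 1$ established above), so that the expressions are well-defined real numbers in the first place.
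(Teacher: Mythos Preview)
Your overall strategy --- direct verification of each item via polynomial inequalities in $n$ --- is exactly what the paper intends (the paper simply states the lemma ``is easy to check'' and omits the proof), and items (1)--(3) and (8) are handled correctly. However, two of the displayed chains of inequalities are false as written, so the argument for items (4)--(7) has genuine gaps.

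For items (4)/(5) you assert $\frac{1}{2n}(n+1+c_n)\le\frac{1}{2n}(4n+1)\le\frac{1+\sqrt5}{2}$. The last step fails: $\frac{4n+1}{2n}=2+\frac{1}{2n}>\frac{1+\sqrt5}{2}\approx1.618$ for every $n$. The crude bound $c_n\le 3n$ from item (1) is not enough here; you need the sharper estimate $c_n\le\sqrt5\,n-1$, which follows from $c_n^2=5n^2-6n+1\le 5n^2-2\sqrt5\,n+1$ (equivalently $6\ge 2\sqrt5$). With that, $\frac{(n+1)+c_n}{2n}\le\frac{(n+1)+\sqrt5 n-1}{2n}=\frac{1+\sqrt5}{2}$, which is what is required for the $i=1$ entry of $f_m$.

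For items (6)/(7) you assert $\frac{1}{n}\bigl(1+\tfrac{c_n}{n-1}\bigr)\le\frac{1}{n}\bigl(1+\tfrac{3n}{n-1}\bigr)\le\frac{4}{n}$. The last step is equivalent to $\tfrac{3n}{n-1}\le 3$, i.e.\ $3n\le 3n-3$, which is never true. Again $c_n\le 3n$ is too weak; the correct refinement is $c_n\le 3(n-1)$, which follows from $(5n-1)(n-1)\le 9(n-1)^2$, i.e.\ $5n-1\le 9n-9$, i.e.\ $n\ge 2$. With this, $1+\tfrac{c_n}{n-1}\le 4$ and item (6) follows; item (7) then holds a fortiori since the relevant quantity $\tfrac{c_n}{n-1}-1\le 2$ is smaller.

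In short, the plan is sound and matches the paper's intent, but the execution needs the two slightly sharper bounds $c_n\le\sqrt5\,n-1$ and $c_n\le 3(n-1)$ in place of $c_n\le 3n$.
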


\begin{theorem}\label{D2nodd}

For the commuting chain on the conjugacy classes of $D_{2n}$ for odd $n\geq3$ we have for any $\varepsilon\in(0,1)$ there exists a $C_{\varepsilon}$ (independent of n)
\[t_{mix}^{(n)}\left(\varepsilon\right)\leq C_{\varepsilon}.\]

\end{theorem}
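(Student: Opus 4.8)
The plan is to exploit the standard spectral expansion of the transition density for a reversible chain together with the explicit eigendata recorded just above. Recall that for a reversible, irreducible, aperiodic chain with transition matrix $\tilde{P}$, stationary distribution $\tilde{\pi}$, and an orthonormal (in $\ell^2(\tilde\pi)$) eigenbasis $f_1\equiv 1,f_2,\dots,f_m$ with eigenvalues $1=\lambda_1>\lambda_2\geq\dots$, one has the pointwise identity
\[
\frac{\tilde{P}^t(K,O)}{\tilde{\pi}(O)}-1=\sum_{\ell=2}^{m}\lambda_\ell^t\,f_\ell(K)f_\ell(O),
\]
and hence, by the triangle inequality and $2\Vert\tilde{P}^t(K,\cdot)-\tilde\pi\Vert_{TV}=\sum_O\tilde\pi(O)\bigl|\tilde{P}^t(K,O)/\tilde\pi(O)-1\bigr|$, a bound of the form $2\Vert\tilde P^t(K,\cdot)-\tilde\pi\Vert_{TV}\leq\sum_{\ell=2}^m\lambda_\ell^t\sum_O\tilde\pi(O)|f_\ell(K)||f_\ell(O)|$. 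Since for odd $n$ there are exactly two nonzero eigenvalues beyond $\lambda_1$, namely $\lambda_2$ and $\lambda_m$ (by \Cref{ZEROMULT}), this sum has only two terms, so everything reduces to estimating $\lambda_\ell^t$ times a coefficient built from the entries of $f_2$ and $f_m$.

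First I would write down the two-term expansion explicitly and take absolute values:
\[
2\Vert\tilde P^t(K,\cdot)-\tilde\pi\Vert_{TV}\leq |\lambda_2|^t\,|f_2(K)|\sum_{i=1}^m\tilde\pi(i)|f_2(i)|+|\lambda_m|^t\,|f_m(K)|\sum_{i=1}^m\tilde\pi(i)|f_m(i)|.
\]
Next I would bound each factor using \Cref{d2n0ddbounds}. The eigenvalue magnitudes are controlled by item 8, $\lambda_\star<\frac{1+\sqrt5}{4}<1$, a constant strictly less than one independent of $n$. For the coefficient sums, split the index set into $\{1\}\cup\{m\}$ (the two ``special'' coordinates) and $\{2,\dots,m-1\}$; on the special coordinates items 4–5 give $|f_2(i)/A_n|,|f_m(i)/B_n|\leq\frac{1+\sqrt5}{2}$, and combined with $A_n,B_n\leq\sqrt n$ (items 2–3) and $\tilde\pi(i)=\frac{2}{n+3}$ these contribute $O(\sqrt n/n)=O(1/\sqrt n)$; on the bulk coordinates items 6–7 give $|f_2(i)/A_n|,|f_m(i)/B_n|\leq\frac4n$, so together with $A_n,B_n\leq\sqrt n$, $\tilde\pi(i)=\frac{2}{n+3}$ and the $\Theta(n)$ many such indices, that block contributes $O(n\cdot \sqrt n\cdot \frac1n\cdot\frac1n)=O(1/\sqrt n)$ as well. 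Thus $\sum_i\tilde\pi(i)|f_\ell(i)|$ is bounded by a constant (in fact $\to 0$), and similarly $|f_\ell(K)|\leq \sqrt n\cdot\frac{1+\sqrt5}{2}$ in the worst case — this last bound is the only crude one, but it is harmless because it gets multiplied by the other $O(1/\sqrt n)$ factor, or more carefully one keeps the product of the two coefficient factors and sees it is $O(1)$ uniformly in $K$.

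Putting this together yields $2\,d^{(n)}(t)\leq C\bigl(\tfrac{1+\sqrt5}{4}\bigr)^{t}$ for an absolute constant $C$ and all $n\geq 3$, where $d^{(n)}(t)=\max_K\Vert\tilde P^t(K,\cdot)-\tilde\pi\Vert_{TV}$ (using \Cref{ccchain} to identify this with the conjugacy-class chain). Then one simply chooses $t$ large enough that $\frac C2\bigl(\tfrac{1+\sqrt5}{4}\bigr)^{t}\leq\varepsilon$, i.e. $t\geq C_\varepsilon:=\bigl\lceil \log\!\bigl(\tfrac{C}{2\varepsilon}\bigr)\big/\log\!\bigl(\tfrac{4}{1+\sqrt5}\bigr)\bigr\rceil$, which is finite and independent of $n$. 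I expect the main obstacle — really the only non-bookkeeping point — to be organizing the coefficient estimate cleanly so that the lone $\sqrt n$ coming from $|f_\ell(K)|$ is always paired against a $1/\sqrt n$ (or better) from the summed factor, rather than being dropped on its own; the cleanest route is to never bound $|f_\ell(K)|$ in isolation but to note $|f_\ell(K)|\cdot\bigl(\sum_i\tilde\pi(i)|f_\ell(i)|\bigr)\leq \max_i|f_\ell(i)|\cdot\sum_i\tilde\pi(i)|f_\ell(i)|$ and estimate that product directly via items 2–7. Everything else is an immediate consequence of \Cref{d2n0ddbounds}, \Cref{ZEROMULT}, and \Cref{ccchain}.
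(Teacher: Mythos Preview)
Your proposal is correct and follows essentially the same route as the paper: both arguments insert the spectral expansion, observe via \Cref{ZEROMULT} that only $\lambda_2$ and $\lambda_m$ contribute, split the state space into $\{1,m\}$ and the bulk $\{2,\dots,m-1\}$, and then invoke the itemized estimates of \Cref{d2n0ddbounds} to show $d^{(n)}(t)\leq C\lambda_\star^t$ with $C$ and $\lambda_\star<\tfrac{1+\sqrt5}{4}$ uniform in $n$. The only cosmetic difference is that you factor out $|f_\ell(K)|$ first and bound $\sum_i\tilde\pi(i)|f_\ell(i)|$ separately, whereas the paper keeps the double sum $\sum_x(|f_2(i)||f_2(x)|+|f_m(i)||f_m(x)|)$ intact; your observation that the lone $\sqrt n$ from $|f_\ell(K)|$ must be paired with the $O(1/\sqrt n)$ from the summed factor is exactly the content of the paper's computation. (One small remark: the appeal to \Cref{ccchain} is unnecessary here, since the statement is already formulated for the conjugacy-class chain.)
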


\begin{proof}

Let $d^{(n)}(t)$ be the distance to stationarity for the commuting chain on the conjugacy classes of $D_{2n}$. We show that for some $\delta\in(0,1) $ we have $d^{(n)}(t)\leq C(\lambda_\star^{(n)})^t\leq C\delta^t=C(1-(1-\delta))^t\leq Ce^{-(1-\delta)t}$  for some $C$ for all $n$. Then taking $t_{mix}^{(n)}(\varepsilon)\leq\frac{1}{1-\delta}\log{\frac{C}{\varepsilon}}$ does the trick as $1-\delta$ is bounded away from $0$.\\

Using the spectral decomposition for a reversible transition matrix we have the following for a fixed starting state $i$ (note here $P$ and $\pi$ are for the chain on the conjugacy classes) 
\[\Vert P^t(i,\cdot)-\pi \Vert_{TV}=  \frac{1}{2}\sum_{x\in\mathcal{X}}| P^t(i,x)-\pi(x)|  = \frac{1}{2}\sum_{x\in\mathcal{X}}|\pi(x)\sum_{j=2}^{m}f_j(i)f_j(x)\lambda^t_j|\]

\[\leq\frac{1}{2}\sum_{x\in\mathcal{X}}\pi(x)\sum_{j=2}^{m}|f_j(i)||f_j(x)||\lambda^t_j|=\frac{1}{n+3}\sum_{x\in\mathcal{X}}\sum_{j=2}^{m}|f_j(i)||f_j(x)||\lambda^t_j|\]
\[\leq \frac{1}{n+3}\lambda_{2}^t\sum_{x\in\mathcal{X}}\sum_{j=2}^{m}|f_j(i)||f_j(x)|=
\frac{1}{n+3}\lambda_{2}^t\sum_{x\in\mathcal{X}}\left(|f_2(i)||f_2(x)|+|f_m(i)||f_m(x)|\right) \]

\[=\frac{1}{n+3}\lambda_{2}^t\left(2n\left(\frac{\sqrt{5}+1}{2}\right)^2 +\sum_{x\in\mathcal{X}\setminus \{1,m\}}\left(|f_2(i)||f_2(x)|+|f_m(i)||f_m(x)|\right)  \right)\]

\[\leq\frac{1}{n+3}\lambda_{2}^t\left(2n\left(\frac{\sqrt{5}+1}{2}\right)^2 + \frac{n-1}{2}\frac{4}{n}\frac{\sqrt{5}+1}{2} n \right)\]

\[\leq\frac{n}{n+3}\left(\frac{\sqrt{5}+1}{2}\right)^2 \lambda_{2}^t\left(2 + \frac{n-1}{2}\frac{4}{n} \right)\]

\[\leq4 \frac{n}{n+3} \left(\frac{\sqrt{5}+1}{2}\right)^2\lambda_{2}^t  \leq  4  \left(\frac{\sqrt{5}+1}{2}\right)^2\lambda_{2}^t \]

using that $\frac{n-1}{n}<1$, $\frac{n}{n+3}<1$, $\left(\frac{\sqrt{5}+1}{2}\right)<\left(\frac{\sqrt{5}+1}{2}\right)^2$, $\lams=\lambda_2$, and the bounds of \Cref{d2n0ddbounds}. Since the above inequality holds for all starting states $i$ we have that $d^{(n)}(t)\leq C\lams^t$. Since $\lams<\frac{1+\sqrt{5}}{4}<1$ we are done.

\end{proof}

\section{$D_{2n}$ for even $n$}

As in the odd case we follow the notation established in \cite{DF}. We have $\frac{n+6}{2}$ conjugacy classes, they are $\{1\},\{r^{\frac{n}{2}}\}, \{r^i,r^{-i}\},\{s,sr^2,....sr^{n-2}\}, ,$ and $\{sr,....sr^{n-1}\}$, where $3\leq i\leq\frac{n+2}{2}$. We will enumerate in that order, that is we will associate with each conjugacy class a natural number as follows:\\

\begin{center}
  \begin{tabular}{ |c | c | c | c | c | c| c| c|} 
    \hline
    $1$ & $2$ & $3$ & ...  &$\frac{n+2}{2} $ & $\frac{n+4}{2} $& $\frac{n+6}{2}$\\ \hline
    $\{1\}$ & $\{r^{\frac{n}{2}}\}$&  $\{r,r^{-1}\}$ &...&$\{r^\frac{n-2}{2}$,$r^{-\frac{n-2}{2}}\}$&$\{s,sr^2,\ldots,sr^{n-2}\}$ &$\{sr,...,sr^{n-1}\}$\\ \hline
  \end{tabular}
\end{center}

Let $m=\frac{n+6}{2}$. When $\frac{n}{2}$ is odd our transition matrix can be written as:\\

\begingroup
\renewcommand*{\arraystretch}{1.5}
\[ P =\begin{pmatrix}
    \frac{1}{2n}&\frac{1}{2n} & \frac{1}{n} & \dots & \frac{1}{n}  & \frac{1}{4} & \frac{1}{4}\\
    
          \frac{1}{2n} &\frac{1}{2n}& \frac{1}{n} & \dots & \frac{1}{n}  & \frac{1}{4} & \frac{1}{4}\\

    \frac{1}{n}&\frac{1}{n}  & \frac{2}{n} & \dots & \frac{2}{n}    &0 & 0\\
    
    \vdots & \vdots & \vdots & \ddots & \vdots & \vdots & \vdots \\
     
     \frac{1}{n}&\frac{1}{n} & \frac{2}{n} & \dots & \frac{2}{n}     &0 & 0\\

      \frac{1}{4}& \frac{1}{4}& 0 & \dots &0 & \frac{1}{4}& \frac{1}{4}\\
     
     \frac{1}{4}& \frac{1}{4}& 0 & \dots &0 & \frac{1}{4}& \frac{1}{4}

\end{pmatrix}
\]
\endgroup

with stationary distribution $\pi(i)=\frac{2}{n+6}$ for $1\leq i \leq m$. If $\frac{n}{2}$ is even $\pi$ is the same but the the bottom right corner is replaced with $\frac{1}{2}I_2$ (where $I_2$ is the $2\times 2 $ identity  matrix).\\

We  list the eigenvalues and eigenvectors corresponding to the commuting chain on conjugacy classes of $D_{2n}$ for even $n$ when $\frac{n}{2}$ is odd. We'll note what changes in the case where $\frac{n}{2}$ is even.  To that end let 

\[c_n=\sqrt{(5n-2)(n-2)},\]

\[A_n=\sqrt{ \frac{n^2(n+6)} {2 \left(5n^2-nc_n+8n+2c_n-4 \right) } }  \text{, and}\] \\ 
\[B_n=\sqrt{ \frac{n^2(n+6)} {2 \left(5n^2+nc_n+8n-2c_n-4 \right) } }.\]

The nonzero eigenvalues for the transition matrix are: 
\[\lambda_1=1,\]
\[\lambda_2=\frac{n-2+c_n}{4n} \text{, and}\] 
 \[\lambda_{m}=\frac{n-2-c_n}{4n}.\]\\

Our corresponding eigenfunctions, normalized in $\ell^2(\pi)$, are\\

 \[ f_1=(1,\dots,1), \]\\
\[ f_2 =A_n\left( \frac{1}{n}\left(-\frac{n+2}{2}+\frac{c_n}{2}\right),\frac{1}{n}\left(-\frac{n+2}{2}+\frac{c_n}{2}\right),\frac{2}{n}\left(-1-\frac{c_n}{n-2}\right),\dots, \frac{2}{n}\left(-1-\frac{c_n}{n-2}\right),1,1\right),
  \]\\
  and
  
  \[ f_{m} =B_n \left( \frac{1}{n}\left(-\frac{n+2}{2}-\frac{c_n}{2}\right),\frac{1}{n}\left(-\frac{n+2}{2}-\frac{c_n}{2}\right),\frac{2}{n}\left(-1+\frac{c_n}{n-2}\right),\dots, \frac{2}{n}\left(-1+\frac{c_n}{n-2}\right),1,1\right). \]
  
  If $\frac{n}{2}$ is even the above eigenfunctions and values remain however an eigenvalue of $\frac{1}{2}$ is added with corresponding eigenfunction  $\left(-\frac{1}{2}\sqrt{\frac{n+6}{2}},\frac{1}{2}\sqrt{\frac{n+6}{2}},0,\dots,0\right)$. The remainder of the analysis will be in the case that $\frac{n}{2}$ is odd, the other case follows from (slightly) modified computations.

We need the following easily verified lemma to bound mixing times. 

\begin{lemma}\label{d2nevenbounds}

For the eigenvalues and functions above we have for $n\geq4$ and $\frac{n}{2}$ odd

\begin{enumerate}

\item $|c_n|\leq 4n$

\item $A_n\leq\sqrt{n}$ 

\item $B_n\leq\sqrt{n}$

\item$ |\frac{f_2(i)}{A_n}|\leq1+\sqrt{5}$ for any $i$

\item $ |\frac{f_m(i)}{B_n}|\leq1+\sqrt{5}$ for any $i$

\item $ |\frac{f_2(i)}{A_n}|\leq \frac{16}{n}$ for $3\leq i\leq m-2$

\item $ |\frac{f_m(i)}{B_n}|\leq \frac{16}{n}$ for $2\leq i\leq m-2$

\item $\lams<\frac{1+\sqrt{5}}{4}.$

\end{enumerate}

\end{lemma}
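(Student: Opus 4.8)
The plan is a direct, item-by-item verification; all eight inequalities collapse onto two elementary estimates for $c_n$, which I would establish first and then feed into each part. Throughout, $c_n=\sqrt{(5n-2)(n-2)}=\sqrt{5n^2-12n+4}$ and $n\ge4$ is even with $\tfrac n2$ odd. Since $5n^2-12n+4<5n^2$ for every $n\ge1$, one has $c_n<\sqrt5\,n$; this already gives item $1$ ($c_n<\sqrt5\,n<4n$). Since $5n-2>n-2>0$ for $n\ge4$, one has $c_n>n-2>0$, and dividing, $\tfrac{c_n}{n-2}=\sqrt{\tfrac{5n-2}{n-2}}=\sqrt{5+\tfrac{8}{n-2}}$ is decreasing in $n$ with value $3$ at $n=4$, so $\sqrt5\le\tfrac{c_n}{n-2}\le3$ for all even $n\ge4$. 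Essentially everything follows from these two facts.

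For items $2$ and $3$ I would clear the square roots. The inequality $A_n\le\sqrt n$ is equivalent to $2c_n(n-2)\le9n^2+10n-8$, and $2c_n(n-2)<2\sqrt5\,(n^2-2n)<2\sqrt5\,n^2<9n^2$, so it holds; moreover the same estimate $c_n(n-2)<\sqrt5\,n^2<5n^2$ shows the denominator $5n^2+8n-4-c_n(n-2)$ in the definition of $A_n$ is strictly positive, so $A_n$ is well defined. For $B_n$ the relevant denominator is $5n^2+8n-4+c_n(n-2)>5n^2+8n-4$, whence $B_n^2\le\tfrac{n^2(n+6)}{2(5n^2+8n-4)}\le n$ for $n\ge1$.

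For items $4$--$7$ I would read off the three shapes of coordinate appearing in $f_2/A_n$, and note that the analysis for $f_m/B_n$ is identical. The two ``central'' coordinates (indices $1,2$) have modulus at most $\tfrac{1}{2n}(c_n+n+2)<\tfrac{(1+\sqrt5)n+2}{2n}\le1+\sqrt5$; the two ``reflection'' coordinates (indices $m-1,m$) equal $1\le1+\sqrt5$; and each ``rotation-pair'' coordinate (indices $3$ through $m-2$) is $\pm\tfrac2n\bigl(1\pm\tfrac{c_n}{n-2}\bigr)$, so has modulus at most $\tfrac2n\bigl(1+\tfrac{c_n}{n-2}\bigr)\le\tfrac8n\le\tfrac{16}{n}$ (and also $\le2<1+\sqrt5$). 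This yields items $4,5$ (all coordinates) and items $6,7$ (the rotation-pair coordinates). Item $8$ is then immediate: $\lams=\max\{\lambda_2,|\lambda_m|\}$, with $\lambda_2=\tfrac{n-2+c_n}{4n}<\tfrac{(1+\sqrt5)n}{4n}=\tfrac{1+\sqrt5}{4}$ and $|\lambda_m|=\tfrac{c_n-(n-2)}{4n}<\tfrac{(\sqrt5-1)n+2}{4n}<\tfrac{1+\sqrt5}{4}$ whenever $n>1$.

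I do not expect a genuine obstacle here; the only points needing care are (i) checking that the denominators in $A_n,B_n$ are strictly positive before squaring (handled by $c_n(n-2)<\sqrt5\,n^2<5n^2$), (ii) the boundary value $n=4$, where $c_4=n+2=6$ makes the central coordinates of $f_2/A_n$ vanish, and (iii) keeping straight which of the $m$ coordinates has which of the three forms. One caveat worth recording: the index-$2$ coordinate of $f_m/B_n$ has modulus $\tfrac{c_n+n+2}{2n}\to\tfrac{1+\sqrt5}{2}$, which is \emph{not} $\le\tfrac{16}{n}$, so the range in item $7$ should read $3\le i\le m-2$, matching item $6$ (and only those indices are used in the companion mixing-time estimate). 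The case $\tfrac n2$ even (outside the scope of this lemma but needed in the sequel) is handled by the same estimates together with the additional eigenvalue $\tfrac12<\tfrac{1+\sqrt5}{4}$.
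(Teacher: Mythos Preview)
Your verification is correct and is exactly what the paper intends: the paper gives no proof of this lemma beyond the phrase ``easily verified,'' so the item-by-item check you outline \emph{is} the paper's approach. Your observation about item~7 is also right---the index range there should read $3\le i\le m-2$ (matching item~6), since the coordinate at $i=2$ has modulus $\tfrac{c_n+n+2}{2n}\not\le\tfrac{16}{n}$; this is a typo in the statement, and as you note only the indices $3,\dots,m-2$ are actually used in the proof of Theorem~\ref{D2neven}.
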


\begin{theorem}\label{D2neven}

For the commuting chain on the conjugacy classes of $D_{2n}$ for even $n\geq4$ and $\frac{n}{2}$ odd we have for any $\varepsilon\in(0,1)$ there exists a $C_{\varepsilon}$ (independent of n)
\[t_{mix}^{(n)}\left(\varepsilon\right)\leq C_{\varepsilon}.\]

\end{theorem}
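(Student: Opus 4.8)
The plan is to mirror the proof of \Cref{D2nodd} almost verbatim, substituting the even-$n$ data. First I would invoke \Cref{ccchain} to note that it suffices to bound $\Vert \tilde P^t(K,\cdot)-\tilde\pi\Vert_{TV}$ for the chain on conjugacy classes, so we are working with the explicit $m\times m$ transition matrix $P$ displayed above ($m=\frac{n+6}{2}$) with $\pi(i)=\frac{2}{n+6}$. By \Cref{ZEROMULT}, since $P$ has many identical rows (the $m-4$ rows coming from the classes $\{r^i,r^{-i}\}$), the only nonzero eigenvalues are $\lambda_1=1$, $\lambda_2=\frac{n-2+c_n}{4n}$, and $\lambda_m=\frac{n-2-c_n}{4n}$, with eigenfunctions $f_1,f_2,f_m$ as listed; this can be checked by direct computation. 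Then, for any fixed starting class $i$, I would write out the spectral decomposition
\[
\Vert P^t(i,\cdot)-\pi\Vert_{TV}=\frac12\sum_{x}\Bigl|\pi(x)\sum_{j=2}^m f_j(i)f_j(x)\lambda_j^t\Bigr|\le \frac{1}{n+6}\,\lambda_2^t\sum_{x}\bigl(|f_2(i)||f_2(x)|+|f_m(i)||f_m(x)|\bigr),
\]
using $|\lambda_j|\le\lambda_2=\lams$ (item 8 of \Cref{d2nevenbounds}) and $\pi(x)=\frac{2}{n+6}$.

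Next I would split the inner sum over $x$ according to whether $x$ is one of the four ``special'' classes (indices $1,2,m-1,m$) or one of the $m-4$ ``bulk'' classes. On the special classes I bound $|f_2(x)|/A_n$ and $|f_m(x)|/B_n$ by $1+\sqrt5$ (items 4,5), and on the bulk classes by $\frac{16}{n}$ (items 6,7); combined with $A_n,B_n\le\sqrt n$ (items 2,3) and $|f_2(i)|,|f_m(i)|\le\sqrt n\,(1+\sqrt5)$, each product $|f_j(i)||f_j(x)|$ is at most $n(1+\sqrt5)^2$ on the $O(1)$ special classes and at most $n(1+\sqrt5)\cdot\frac{16}{n}=16(1+\sqrt5)$ on each of the $\le n/2$ bulk classes. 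Summing gives $\sum_x(\cdots)\le C'n$ for an absolute constant $C'$, so that $\Vert P^t(i,\cdot)-\pi\Vert_{TV}\le \frac{C'n}{n+6}\lams^t\le C\lams^t$ uniformly in $i$ and $n$; hence $d^{(n)}(t)\le C\lams^t$.

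Finally, since $\lams<\frac{1+\sqrt5}{4}<1$ by item 8 of \Cref{d2nevenbounds}, set $\delta=\frac{1+\sqrt5}{4}$ so that $d^{(n)}(t)\le C\delta^t\le Ce^{-(1-\delta)t}$; then $t^{(n)}_{mix}(\varepsilon)\le\ceil{\frac{1}{1-\delta}\log\frac{C}{\varepsilon}}=:C_\varepsilon$, independent of $n$, which is the claim. When $\frac n2$ is even there is the extra eigenvalue $\frac12<\lams$ with a bounded eigenfunction, contributing only a further bounded term, so the same argument goes through with adjusted constants; I would remark on this rather than redo it. The main obstacle is purely bookkeeping: getting the constants in \Cref{d2nevenbounds} right (especially the $\frac{16}{n}$ bulk bounds, where the even-$n$ eigenfunctions have the factor $\frac2n$ rather than $\frac1n$ and the denominator is $n-2$ rather than $n-1$) and making sure the split into special versus bulk classes accounts for all $m$ coordinates; there is no conceptual difficulty beyond what already appears in \Cref{D2nodd}.
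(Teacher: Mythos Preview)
Your proposal is correct and follows the paper's own proof essentially verbatim: the spectral decomposition, the split of the state space into the four ``special'' classes $\{1,2,m-1,m\}$ and the $m-4$ bulk classes, and the application of the bounds in \Cref{d2nevenbounds} to obtain $d^{(n)}(t)\le C\lams^t$ with $\lams<\frac{1+\sqrt5}{4}$ are exactly what the paper does. The only superfluous step is your appeal to \Cref{ccchain}: the theorem is stated directly for the chain on conjugacy classes, so no reduction is needed.
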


\begin{proof}

As in the even case we show $d^{(n)}(t)\leq C\lams^t$  for some $C$ for all $n$, where $d^{(n)}(t)$ is the distance to stationarity for the chain on the conjugacy classes of $D_{2n}$.

Using the spectral decomposition for a reversible transition matrix we have the following for a fixed starting state $i$ \[\Vert P^t(i,\cdot)-\pi \Vert_{TV}=  \frac{1}{2}\sum_{x\in\mathcal{X}}| P^t(i,x)-\pi(x)|  = \frac{1}{2}\sum_{x\in\mathcal{X}}|\pi(x)\sum_{j=2}^{m}f_j(i)f_j(x)\lambda^t_j|\]

\[\leq\frac{1}{2}\sum_{x\in\mathcal{X}}\pi(x)\sum_{j=2}^{m}|f_j(i)||f_j(x)||\lambda^t_j|=\frac{1}{n+6}\sum_{x\in\mathcal{X}}\sum_{j=2}^{m}|f_j(i)||f_j(x)||\lambda^t_j|\]

\[\leq \frac{1}{n+6}\lambda_{2}^t\sum_{x\in\mathcal{X}}\sum_{j=2}^{m}|f_j(i)||f_j(x)|=
\frac{1}{n+6}\lambda_{2}^t\sum_{x\in\mathcal{X}}\left(|f_2(i)||f_2(x)|+|f_m(i)||f_m(x)|\right) \]

\[=\frac{1}{n+6}\lambda_{2}^t\left(2n\left(\sqrt{5}+1\right)^2 +\sum_{x\in\mathcal{X}\setminus \{1,2,m-1,m\}}\left(|f_2(i)||f_2(x)|+|f_m(i)||f_m(x)|\right)  \right)\]

\[\leq\frac{1}{n+6}\lambda_{2}^t\left(2n\left(\sqrt{5}+1\right)^2 + \frac{n-2}{2}\frac{16}{n}\left(\sqrt{5}+1\right) n\right)\]

\[\leq\frac{n}{n+6}\lambda_{2}^t  \left(\sqrt{5}+1\right)^2 \left(2 + \frac{n-2}{2}\frac{16}{n} \right)\]

\[\leq 10\frac{n}{n+6}  \left(\sqrt{5}+1\right)^2\lambda_{2}^t \leq 10  \left(\sqrt{5}+1\right)^2 \lambda_{2}^t  \]

using that $\frac{n-2}{n}<1$, $\frac{n}{n+6}<1$, $\left(\sqrt{5}+1\right)<\left(\sqrt{5}+1\right)^2$, $\lams=\lambda_2$, and the eigenvectors bounds of \Cref{d2nevenbounds}. Since the above inequality holds for all starting states $i$ we have that $d^{(n)}(t)\leq C\lams^t$.

\end{proof}

\end{appendices}

\section*{Acknowledgements} We'd like to thank Jason Fulman for suggesting this problem and providing guidance and helpful discussion. We'd also like to thank Persi Diaconis and Robert Guralnick for helpful discussion. Finally we'd like to thank the referee for thoroughly reading through this article and pointing out numerous errors.

\bibliography{bibl} 
\bibliographystyle{alpha}

 \end{document}